\numberwithin{equation}{section}
\theoremstyle{plain}
\newtheorem{Cl}[equation]{Claim}
\newtheorem{Lem}[equation]{Lemma}
\newtheorem{Prop}[equation]{Proposition}
\newtheorem{Thm}[equation]{Theorem}
\theoremstyle{definition}
\newtheorem{Defn}[equation]{Definition}
\theoremstyle{remark}
\theoremstyle{plain}
\newtheorem*{Cl*}{Claim}
\newtheorem*{Conj*}{Conjecture}
\newtheorem*{Lem*}{Lemma}
\newtheorem*{Prop*}{Proposition}
\newtheorem*{Q*}{Question}
\newtheorem*{Schol*}{Scholium}
\newtheorem*{SubCl*}{Subclaim}
\newtheorem*{Thm*}{Theorem}
\theoremstyle{definition}
\newtheorem*{Cond*}{Condition}
\newtheorem*{Cstr*}{Construction}
\newtheorem*{Defn*}{Definition}
\newtheorem*{Ex*}{Example}
\newtheorem*{Exs*}{Examples}
\newtheorem*{Md*}{Method}
\newtheorem*{Nt*}{Notation}
\newtheorem*{Pty*}{Property}
\theoremstyle{remark}
\newtheorem*{Rk*}{Remark}
\newtheorem*{Rks*}{Remarks}
\newtheorem*{A-d}{Aside}
\newcommand{\cag}{\begin{equation}\begin{gathered}}
\newcommand{\caag}{\end{gathered}\end{equation}}
\newcommand{\caw}{\begin{equation*}\begin{gathered}}
\newcommand{\caaw}{\end{gathered}\end{equation*}}
\newcommand{\e}{\begin{equation}\begin{aligned}}
\newcommand{\ee}{\end{aligned}\end{equation}}
\newcommand{\ew}{\begin{equation*}\begin{aligned}}
\newcommand{\eew}{\end{aligned}\end{equation*}}
\newcommand{\bcd}{\begin{tikzcd}}
\newcommand{\ecd}{\end{tikzcd}}
\newcommand{\bma}{\begin{matrix}}
\newcommand{\ema}{\end{matrix}}
\newcommand{\bpm}{\begin{pmatrix}}
\newcommand{\epm}{\end{pmatrix}}
\newcommand{\bvm}{\begin{vmatrix}}
\newcommand{\evm}{\end{vmatrix}}
\newcommand{\nts}{\begin{tcolorbox}}
\newcommand{\ntss}{\end{tcolorbox}}
\newcommand{\cref}[1]{Corollary \ref{#1}}
\newcommand{\eref}[1]{eqn.\hspace{0.6mm}(\ref{#1})}
\newcommand{\erefs}[1]{eqns.\hspace{0.6mm}(\ref{#1})}
\newcommand{\lref}[1]{Lemma \ref{#1}}
\newcommand{\pref}[1]{Proposition \ref{#1}}
\newcommand{\sref}[1]{\S\ref{#1}}
\newcommand{\tref}[1]{Theorem \ref{#1}}
\newcommand{\trefs}[1]{Theorems \ref{#1}}
\newcommand{\mss}[1]{\mbox{\scriptsize \(#1\)}}
\newcommand{\mns}[1]{\mbox{\normalsize \(#1\)}}
\newcommand{\mla}[1]{\mbox{\large \(#1\)}}
\newcommand{\bb}[1]{\mathbb{#1}}
\newcommand{\cal}[1]{\mathscr{#1}}
\newcommand{\fr}[1]{\mathfrak{#1}}
\newcommand{\mc}[1]{\mathcal{#1}}
\newcommand{\et}{\hspace{3mm}\text{and}\hspace{3mm}}
\newcommand{\hs}[1]{\hspace{#1}}
\newcommand{\vs}[1]{\vspace{#1}}
\DeclareMathSymbol{\Alpha}{\mathalpha}{operators}{"41}
\DeclareMathSymbol{\Beta}{\mathalpha}{operators}{"42}
\DeclareMathSymbol{\Epsilon}{\mathalpha}{operators}{"45}
\DeclareMathSymbol{\Zeta}{\mathalpha}{operators}{"5A}
\DeclareMathSymbol{\Eta}{\mathalpha}{operators}{"48}
\DeclareMathSymbol{\Iota}{\mathalpha}{operators}{"49}
\DeclareMathSymbol{\Kappa}{\mathalpha}{operators}{"4B}
\DeclareMathSymbol{\Mu}{\mathalpha}{operators}{"4D}
\DeclareMathSymbol{\Nu}{\mathalpha}{operators}{"4E}
\DeclareMathSymbol{\Omicron}{\mathalpha}{operators}{"4F}
\DeclareMathSymbol{\Rho}{\mathalpha}{operators}{"50}
\DeclareMathSymbol{\Tau}{\mathalpha}{operators}{"54}
\DeclareMathSymbol{\Chi}{\mathalpha}{operators}{"58}
\DeclareMathSymbol{\omicron}{\mathord}{letters}{"6F}
\newcommand{\al}{\alpha}
\newcommand{\be}{\beta}
\newcommand{\ga}{\gamma}
\newcommand{\de}{\delta}
\newcommand{\ep}{\varepsilon}
\renewcommand{\th}{\theta}
\newcommand{\io}{\iota}
\newcommand{\la}{\lambda}
\newcommand{\vpi}{\varpi}
\newcommand{\si}{\sigma}
\newcommand{\ph}{\phi}
\newcommand{\vph}{\upvarphi}
\newcommand{\vps}{\uppsi}
\newcommand{\ch}{\chi}
\newcommand{\ps}{\psi}
\newcommand{\Ga}{\Gamma}
\newcommand{\De}{\Delta}
\newcommand{\Om}{\Omega}
\newcommand{\Aut}{\operatorname{Aut}}
\newcommand{\Ds}{\bigoplus}
\newcommand{\ds}{\oplus}
\newcommand{\lqt}[2]{\left.\raisebox{-1mm}{\(#2\)}\middle\backslash\raisebox{1mm}{\(#1\)}\right.}
\newcommand{\rqt}[2]{\left.\raisebox{1mm}{\(#1\)}\middle/\raisebox{-1mm}{\(#2\)}\right.}
\newcommand{\ts}{\otimes}
\newcommand{\x}{\times}
\newcommand{\del}{\partial}
\newcommand{\Hocl}[1]{\overset{\circ}{H^{#1}}_{\kern-1.9mm\cl}}
\newcommand{\lop}{\left\|\kern-1.30mm\left\|}
\newcommand{\op}{\|\kern-1.30mm\|}
\newcommand{\rop}{\right\|\kern-1.30mm\right\|}
\newcommand{\SI}{\operatorname{\cal{I}\kern-1.5pt nd}}
\newcommand{\C}{\Subset}
\newcommand{\cc}{\subseteq}
\newcommand{\es}{\emptyset}
\newcommand{\mt}{\mapsto}
\newcommand{\osr}{\backslash}
\newcommand{\oto}[1]{\xrightarrow{#1}}
\newcommand{\pc}{\subset}
\newcommand{\B}{\mathrm{B}}
\newcommand{\CCH}[1][]{\mathcal{H}_{4#1}}
\newcommand{\CH}[1][]{\mathcal{H}_{3#1}}
\newcommand{\cl}{\mathrm{closed}}
\newcommand{\dd}{\mathrm{d}}
\newcommand{\dR}[1]{H^{#1}_{\operatorname{dR}}}
\newcommand{\emb}{\hookrightarrow}
\newcommand{\hk}{\righthalfcup}
\newcommand{\Hs}{\raisebox{1pt}{\mss{\bigstar}}}
\newcommand{\itr}[1]{\overset{\circ}{#1}}
\newcommand{\M}{\mathrm{M}}
\newcommand{\SCCH}[1][]{\widetilde{\mathcal{H}}_{4#1}}
\newcommand{\SCH}[1][]{\widetilde{\mathcal{H}}_{3#1}}
\newcommand{\sph}{\widetilde{\phi}}
\newcommand{\sps}{\widetilde{\psi}}
\renewcommand{\ss}[2][{}]{\bigodot{\hspace{-1mm}}^{#2}_{#1}\hspace{0.6mm}}
\newcommand{\supp}{\operatorname{supp}}
\newcommand{\svph}{\widetilde{\upvarphi}}
\newcommand{\svps}{\widetilde{\uppsi}}
\newcommand{\T}{\mathrm{T}}
\newcommand{\tl}{{\mns{\sim}}}
\newcommand{\w}{\wedge}
\newcommand{\ww}[2][{}]{\bigwedge{\hspace{-1mm}}^{#2}_{\hspace{1mm}#1}\hspace{0.1mm}}
\newcommand{\0}{\infty}
\newcommand{\1}{\cdot}
\renewcommand{\ge}{\geqslant}
\newcommand{\gl}{\hspace{0.4mm}\raisebox{0.8mm}{\(>\)}\kern-1.8mm\raisebox{-0.8mm}{\(<\)}\hspace{0.4mm}}
\newcommand{\gle}{\hspace{0.4mm}\raisebox{1.2mm}{\(\ge\)}\kern-1.8mm\raisebox{-1.2mm}{\(\le\)}\hspace{0.4mm}}
\renewcommand{\le}{\leqslant}
\newcommand{\pt}{\bullet}
\newcommand{\diag}{\operatorname{diag}}
\newcommand{\g}{\(\mathrm{G}_2\)}
\newcommand{\Gg}{\mathrm{G}}
\newcommand{\GL}{\operatorname{GL}}
\newcommand{\sg}{\(\widetilde{\mathrm{G}}_2\)}
\newcommand{\SO}{\operatorname{SO}}
\newcommand{\Stab}{\operatorname{Stab}}
\renewcommand{\iff}{if and only if}
\newcommand{\wlg}{without loss of generality}
\newcommand{\wrt}{with respect to}
\newcommand{\ol}[1]{\overline{#1}}
\newcommand{\h}{\widehat}
\newcommand{\lt}{\left}
\newcommand{\m}{\middle}
\newcommand{\rt}{\right}
\newcommand{\tld}{\widetilde}
\title{Unboundedness above and below of the Donaldson--Hitchin functionals on \g- and \sg-forms}
\author{Laurence H. Mayther}
\begin{document}\fontsize{10pt}{12pt}\selectfont
\begin{abstract}
\footnotesize{This paper combines explicit local calculations with covering arguments to prove the unboundedness above and below (in a logarithmic sense) of the Donaldson--Hitchin functionals on \g\ 4-forms, \sg\ 3-forms and \sg\ 4-forms, over compact manifolds (or, more generally, orbifolds) with boundary.  In addition, the Donaldson--Hitchin functional on \g\ 3-forms over compact manifolds (or orbifolds) with boundary is shown to be unbounded below.  As scholia, the critical points of the functionals on \g\ 4-forms, \sg\ 3-forms and \sg\ 4-forms are shown to be saddles, and initial conditions of the Laplacian coflow which do not lead to convergent solutions are shown to be dense.}
\end{abstract}
\maketitle

\section{Introduction}

Let \(\M\) be an oriented 7-manifold.  A 3-form \(\ph\) on \(\M\) is termed a \g\ 3-form if, for each \(x \in \M\), there exists an orientation-preserving isomorphism \(\al:\T_x\M \to \bb{R}^7\) identifying \(\ph|_x\) with the 3-form:
\ew
\vph_0 = \th^{123} + \th^{145} + \th^{167} + \th^{246} - \th^{257} - \th^{347} - \th^{356} \in \ww{3}\lt(\bb{R}^7\rt)^*,
\eew
where \(\lt(\th^1,...,\th^7\rt)\) denotes the standard basis of \(\lt(\bb{R}^7\rt)^*\) and multi-index notation \(\th^{ij...k} = \th^i \w \th^j \w ... \w \th^k\) is used throughout this paper.  Since the stabiliser of \(\vph_0\) in \(\GL_+(7;\bb{R})\) is \g, a \g\ 3-form \(\ph\) on \(\M\) is equivalent to a \g-structure on \(\M\), i.e.\ a principal \g-subbundle of the frame bundle of \(\M\).  Since \(\Gg_2 \pc \SO(7)\) preserves the Euclidean inner product \(g_0\) and volume element \(vol_0\) on \(\bb{R}^7\), it follows that \(\ph\) induces a Riemannian metric \(g_\ph\) and volume form \(vol_\ph\) on \(\M\).  Say that \(\ph\) (or, more generally, the corresponding \g-structure on \(\M\)) is torsion-free if both \(\dd\ph = 0\) and \(\dd\Hs_\ph\ph = 0\), where \(\Hs_\ph\) is the Hodge star induced by \(g_\ph\) and \(vol_\ph\); in this case, by a well-known result of Fern\'{a}ndez--Gray \cite{RMwSGG2}, the holonomy group of the metric \(g_\ph\) is a subgroup of the exceptional holonomy group \g.

\g\ 3-forms are stable in the sense of Hitchin \cite{SF&SM} in that the \(\GL_+(7;\bb{R})\) orbit of \(\vph_0\) in \(\ww{3}\lt(\bb{R}^7\rt)^*\) is open.  Thus, following Hitchin, if \(\M\) is closed then given a closed \g\ 3-form \(\ph\) on \(\M\), write \([\ph] \in \dR{3}(\M)\) for the de Rham class defined by \(\ph\), define \([\ph]_+ = \lt\{ \ph' \in [\ph] ~\m|~ \ph' \text{ is a \g\ 3-form}\rt\}\) and define the Hitchin functional \(\CH: [\ph]_+ \to (0,\0)\) by:
\ew
\CH(\ph') = \bigintsss_\M vol_{\ph'}.
\eew
\(\CH\) has the important geometric property that its critical points are precisely those \(\ph' \in [\ph]_+\) which are torsion-free.

In the more general case where \(\M\) is compact with boundary, whilst the set \([\ph]_+\) and the functional \(\CH\) remain well defined, it is more natural (as observed by Donaldson in \cite{BVPiD74&3RtEH,AEBVPfG2S}) from the perspective of boundary value problems to restrict attention to those \(\ph' \in [\ph]_+\) which have `the same boundary value' as \(\ph\).  This leads naturally to a boundary version of the functional \(\CH\), which I shall term the Donaldson--Hitchin functional \(\CH^\del\), defined as follows.  Let \(\Om^p(\M; \del \M) = \lt\{\al \in \Om^p(\M) ~\m|~ \al|_x = 0 \text{ for all } x \in \del\M \rt\}\) and set:
\ew
[\ph]_+^\del = \lt\{ \ph' \in \ph + \dd\Om^2(\M;\del\M) ~\m|~ \ph' \text{ is a \g\ 3-form}\rt\}.
\eew
Then \(\CH^\del:[\ph]_+^\del \to (0,\0)\) is defined by:
\ew
\CH^\del(\ph') = \bigintsss_\M vol_{\ph'}.
\eew
Again, the functional \(\CH^\del\) has the important property that its critical points are precisely those \(\ph' \in [\ph]_+^\del\) which are torsion-free.  Moreover, in the case \(\del\M = \es\) one has \([\ph]_+^\del = [\ph]_+\) and thus \(\CH^\del\) is simply the usual Hitchin functional \(\CH\).

This paper considers two natural generalisations of the functional \(\CH^\del\).  Firstly, consider the forms:
\caw
\vps_0 = \th^{4567} + \th^{2367} + \th^{2345} + \th^{1357} - \th^{1346} - \th^{1256} - \th^{1247} \in\ww{4}(\bb{R}^7)^*;\\
\svph_0 = \th^{123} - \th^{145} - \th^{167} + \th^{246} - \th^{257} - \th^{347} - \th^{356} \in\ww{3}(\bb{R}^7)^*;\\
\svps_0 = \th^{4567} - \th^{2367} - \th^{2345} + \th^{1357} - \th^{1346} - \th^{1256} - \th^{1247} \in\ww{4}(\bb{R}^7)^*.
\caaw
Exterior forms on \(\M\) modelled on \(\vps_0\), \(\svph_0\) and \(\svps_0\) are termed \g\ 4-forms, \sg\ 3-forms and \sg\ 4-forms, respectively, and are equivalent to \g- and \sg-structures on \(\M\) as appropriate.  (Here, \sg\ denotes the centreless, doubly connected Lie group corresponding to the split real form of the exceptional Lie algebra \(\fr{g}_{2,\bb{C}}\).)  Such forms are, once again, stable and one can define corresponding Donaldson--Hitchin functionals in each case, here denoted \(\CCH^\del\), \(\SCH^\del\) and \(\SCCH^\del\), respectively.  Secondly, whilst the original papers of Hitchin and Donaldson considered only the case where \(\M\) is a manifold, their definitions are equally valid in the case where \(\M\) is an orbifold and accordingly one may consider the functionals \(\CH^\del\), \(\CCH^\del\), \(\SCH^\del\) and \(\SCCH^\del\) in the orbifold setting.  I remark that, whilst the functional \(\CCH^\del = \CCH\) has been considered in the case of closed manifolds \cite{TGo3Fi6&7D, SF&SM, STBoaMLCoG2S}, it has not, to the author's knowledge, been previously considered in the case of manifolds with boundary or in the case of orbifolds and, moreover, neither of the functionals \(\SCH^\del\) and \(\SCCH^\del\) have been previously considered in the literature.

The main result of this paper is the following.

\begin{Thm}\label{UB-Thm}
The functionals \(\CCH^\del\), \(\SCH^\del\) and \(\SCCH^\del\) are always unbounded above and below.  Specifically, let \(\M\) be a compact, oriented 7-manifold (or, more generally, 7-orbifold) with boundary and suppose that \(\M\) admits a closed \g\ 4-form \(\ps\).  Then, defining \([\ps]^\del_+\) by analogy with \([\ph]^\del_+\) above:
\ew
\inf_{\ps' \in [\ps]_+^\del} \CCH^\del\lt(\ps'\rt) = 0 \et \sup_{\ps' \in [\ps]^\del_+} \CCH^\del(\ps') = \0.
\eew
Likewise, the analogous statement holds for the functionals \(\SCH^\del\) and \(\SCCH^\del\).  In particular, by taking \(\del\M = \es\), the same conclusions apply to the Hitchin functionals \(\CCH\), \(\SCH\) and \(\SCCH\).
\end{Thm}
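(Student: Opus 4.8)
The plan is to reduce the theorem to a pointwise algebraic fact about the Hitchin volume together with a covering-and-iteration argument, in the spirit of the ``explicit local calculations with covering arguments'' announced in the abstract. Write $F$ for the pointwise volume density: there is a $\GL_+(7;\bb{R})$-equivariant map, smooth on the open orbit $\cal{O} \pc \ww{4}(\bb{R}^7)^*$ of $\vps_0$ and positively homogeneous of degree $7/4$, with $vol_{\ps'}|_x = F(\ps'|_x)$, so that $\CCH^\del(\ps') = \int_\M F(\ps')$; concretely $F(A^*\vps_0) = |\det A|\,vol_0$. The only constraints on a competitor $\ps'$ are that $\ps' - \ps \in \dd\Om^3(\M;\del\M)$ and that $\ps'|_x \in \cal{O}$ for every $x$. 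Everything below works verbatim for $\SCH^\del$ (replace $4$, $7/4$ by $3$, $7/3$) and for $\SCCH^\del$ (replace $\vps_0$ by $\svps_0$), since in each case the relevant orbit is an open cone; and $\del\M = \es$ is a special case.

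The main tool is a high-frequency oscillation move. Fix a small coordinate ball $B = B(p,r) \C \M \osr \del\M$ on which, after an affine change of coordinates sending $\ps|_p$ to $\vps_0$, the form $\ps$ is nearly constant ($\approx \vps_0$); fix a unit covector $v$, a $3$-form $\la$ not divisible by $v^*$, an amplitude $A$, and a cut-off $\chi$ equal to $1$ on $\tfrac12 B$ and vanishing near $\del B$, and put $\eta = Ah\,\chi\,\sin(v\cdot x/h)\,\la$, $\ps' = \ps + \dd\eta$. Since $\|\eta\|_{C^0} = O(h)$, every contribution to $\dd\eta$ involving $\dd\chi$ or $\dd\la$ is $O(h)$, so $\dd\eta = A\cos(v\cdot x/h)\,\Phi + O(h)$ with $\Phi := v^*\w\la$; hence, as $h \to 0$, $\ps'|_x$ on $\tfrac12 B$ runs over the line segment $\{\ps|_x + s\Phi : |s|\le A\}$ with $s$ carrying the arcsine law, and
\[
\int_{\tfrac12 B} vol_{\ps'} \;\longrightarrow\; \int_{\tfrac12 B} \big\langle F(\ps + s\Phi)\big\rangle_{s}\,\dd x,
\]
provided that segment stays inside $\cal{O}$ (which keeps $\ps'$ stable; one needs only $\ps$ close enough to $\vps_0$ and $A$ not too large). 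By $\GL_+$-equivariance the ratio $\big\langle F(\vps_0 + s\Phi)\big\rangle_s / F(\vps_0)$ depends only on the direction $\Phi$, which ranges over all $4$-forms divisible by a covector — a $20$-dimensional cone in $\ww{4}(\bb{R}^7)^*$. Because $F$ is positively homogeneous of degree $7/4 > 1$ it is neither convex nor concave on $\cal{O}$, and \emph{one checks that its Hessian at $\vps_0$ — a $\Gg_2$-invariant quadratic form on $\ww{4}(\bb{R}^7)^* = \Lambda^4_1 \oplus \Lambda^4_7 \oplus \Lambda^4_{27}$ — remains indefinite when restricted to this admissible cone}; consequently there are admissible directions $\Phi_+$, $\Phi_-$ and an absolute $c \in (0,1)$ with $\big\langle F(\vps_0 + s\Phi_+)\big\rangle_s \ge (1+c)F(\vps_0)$ and $\big\langle F(\vps_0 + s\Phi_-)\big\rangle_s \le (1-c)F(\vps_0)$ for a suitable fixed $A$, and, along the same (locally convex, resp.\ concave) profiles, $vol_{\ps'} \ge vol_\ps$ (resp.\ $\le vol_\ps$) throughout the transition shell as well.

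To assemble: given a closed \g\ $4$-form $\ps$, cover the interior of $\M$ (the singular set of an orbifold, being of measure zero, is avoided) by finitely many pairwise disjoint coordinate balls $B_i$ on which $\ps$ is nearly constant, such that $vol_\ps\big(\M \osr \bigcup_i B_i\big) < \ep$ and $\sum_i vol_\ps\big(B_i \osr (1-\nu)B_i\big) < \ep$ (a Vitali packing, absolute continuity of $vol_\ps$, and a small $\nu$). In each $B_i$ run the oscillation move with direction $\Phi_+$ adapted to $\ps|_{B_i}$ and a single small $h$; the moves have pairwise disjoint supports, so they glue to one closed, everywhere-stable $\ps' \in [\ps]^\del_+$ with
\[
\CCH^\del(\ps') \;\ge\; (1+c)\,vol_\ps\big(\textstyle\bigcup_i (1-\nu)B_i\big) + vol_\ps\big(\textstyle\bigcup_i(B_i\osr(1-\nu)B_i)\big) \;\ge\; (1+c)\CCH^\del(\ps) - C\ep .
\]
Letting $\ep \to 0$ (and $h\to 0$) gives $\ps' \in [\ps]^\del_+$ with $\CCH^\del(\ps') \ge (1 + \tfrac{c}{2})\CCH^\del(\ps)$; since $\ps'$ is again a closed \g\ $4$-form with $[\ps']^\del_+ = [\ps]^\del_+$, iterating yields $\CCH^\del(\ps^{(m)}) \ge (1+\tfrac{c}{2})^m\CCH^\del(\ps) \to \0$, so $\sup_{[\ps]^\del_+}\CCH^\del = \0$. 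The same construction with $\Phi_-$ in place of $\Phi_+$ produces $\ps'$ with $\CCH^\del(\ps') \le (1-\tfrac{c}{2})\CCH^\del(\ps)$, and iterating gives $\inf_{[\ps]^\del_+}\CCH^\del = 0$.

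The main obstacle is the italicised clause above: exhibiting the admissible directions $\Phi_\pm$ and the constant $c$. Concretely one must understand $F(\vps_0 + s\,v^*\w\la)$ — equivalently the second variation of the Hitchin volume restricted to the $20$-dimensional cone of covector-divisible $4$-forms — well enough to see that it is indefinite there; the natural candidates to test are the $\Lambda^4_7$- and $\Lambda^4_{27}$-flavoured covector-divisible directions such as $\Phi = \th^1\w\iota_{e_1}\vps_0$ and $\Phi = \th^{1234}$ (note that $\th^1\w\iota_{e_1}\vps_0$ happens to give an exactly \emph{linear} profile, since it is an anisotropic rescaling, so one must look slightly off it), together with the fact that the profile is strictly convex in the genuinely ``round'' direction $\vps_0$ itself. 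The remaining ingredients — Vitali packings, absolute continuity of $vol_\ps$, the $O(h)$ cut-off estimates, and the iteration — are routine, and the arguments for $\SCH^\del$, $\SCCH^\del$, and for $\del\M = \es$, are identical.
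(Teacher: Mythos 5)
Your overall architecture -- an absolute multiplicative gain/loss produced on small balls where the form is nearly standard, glued over a disjoint Vitali-type cover, and then iterated using that the constant is independent of the current form -- is exactly the strategy of the paper (\lref{P}, \lref{cov-lem} and \sref{GMT}), and your local mechanism (fixed-amplitude high-frequency oscillations, with the gain read off from the arcsine-averaged profile, the first-order term disappearing in the limit \(h\to0\)) is a legitimate alternative to the paper's mechanism (small-amplitude compactly supported perturbations plus a Taylor expansion about the rescaled limit \(\ps_0\), which is a critical point). However, the step you italicise and then explicitly defer -- exhibiting covector-divisible directions \(\Phi_\pm=v^*\w\la\) on which the second variation of the pointwise volume at \(\vps_0\) is strictly positive, resp.\ strictly negative -- is precisely the non-trivial content of the whole argument, and you have not proved it. The inference you offer in its place is not valid: positive homogeneity of degree \(7/4>1\) does not preclude convexity (\(|x|^2\) is \(2\)-homogeneous and convex), and indefiniteness of the Hessian on all of \(\ww{4}\lt(\bb{R}^7\rt)^*\) does not by itself give indefiniteness on the cone of covector-divisible forms. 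So, as written, the proposal has a genuine gap at its load-bearing point; this is the analogue of \lref{P0}(1), which the paper establishes by explicit computation.

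The gap is fillable, and your candidate directions are essentially the right ones. By \pref{Hitchin-derivs} the pointwise Hessian at \(\vps_0\) is \(\tfrac14\lt(\tfrac34\|\pi_1\|^2+\|\pi_7\|^2-\|\pi_{27}\|^2\rt)\). The form \(\th^1\w\vph_0\) is covector-divisible and lies in \(\ww[7]{4}\lt(\bb{R}^7\rt)^*\), so the Hessian is strictly positive on it (this is the pointwise content of the paper's choice \(\al^+=f(r)\Hs_0\ps_0\), whose differential is \(\dd f\w\vph_0\)); taking \(\th^1\w\vph_0\) rather than the anisotropic-rescaling direction \(\th^1\w(e_1\hk\vps_0)\) also removes your worry about an exactly linear profile. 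For the negative direction, \(\th^{1234}\) has \(\|\pi_1\|^2=0\), \(\|\pi_7\|^2=\tfrac14\|\th^{1234}\|^2\), \(\|\pi_{27}\|^2=\tfrac34\|\th^{1234}\|^2\), giving Hessian value \(-\tfrac18\|\th^{1234}\|^2<0\) (the pointwise content of \(\al^-=f(r)\dd x^{123}\) in \lref{P0}). With these computations supplied, strict convexity/concavity of the profiles on \([-A,A]\) for small fixed \(A\) gives both your averaged gain \(c\) and the Jensen-type inequality you need on the transition shell, and the covering and iteration steps then go through as in the paper (your constant \(c\) is absolute, which is what makes the iteration legitimate, mirroring the \(\ps\)-independence of \(\ep\) in \lref{P}); the \sg\ cases additionally require the Cartan-involution device of the paper's \S5, or some substitute, to produce the auxiliary Riemannian balls your covering uses.
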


By contrast, the question of whether \(\CH^\del\) (or even \(\CH\)) is unbounded above remains open (although recent work by the author \cite{URftHFoG23F} has proven the unboundedness above of \(\CH\) in some specific examples).  Nevertheless, a partial analogue of \tref{UB-Thm} can obtained for the functional \(\CH^\del\):
\begin{Thm}\label{MT'}
The functional \(\CH^\del\) is always unbounded below.  Specifically, let \(\M\) be a compact, oriented 7-manifold (or, more generally, 7-orbifold) with boundary, let \(\ph\) be a closed \g\ 3-form on \(\M\).  Then:
\ew
\inf_{\ph' \in [\ph]^\del_+} \CH^\del\lt(\ph'\rt) = 0.
\eew
In particular, by taking \(\del\M = \es\), the same conclusion applies to the Hitchin functional \(\CH\).
\end{Thm}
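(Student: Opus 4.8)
\emph{Proof strategy.} The theorem will be deduced, by a covering argument, from the following \emph{local lemma}: for every $\ep > 0$ there is a closed \g\ 3-form $\vph_\ep$ on the closed unit ball $\B \pc \bb{R}^7$ which coincides with the standard \g\ 3-form $\vph_0$ on a neighbourhood of $\del\B$, differs from $\vph_0$ by an element of $\dd\Om^2(\B;\del\B)$ (automatic once $\vph_\ep = \vph_0$ near $\del\B$, since $H^3_c(\B) = 0$), and satisfies $\int_\B vol_{\vph_\ep} < \ep$. Essentially all of the work lies in proving this lemma.

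Granting it, fix $\M$, a closed \g\ 3-form $\ph$ and $\eta > 0$. By the Vitali covering lemma applied to the interior of $\M$ with the measure $vol_\ph$, choose finitely many pairwise disjoint closed geodesic balls $\B_1, \dots, \B_N$ in the interior, of radii $r_i$ as small as we wish, with $\int_{\M \osr \bigcup_i \B_i} vol_\ph < \eta$. On $\B_i$, pass to geodesic normal coordinates for $g_\ph$, rescaled to unit size: then $\ph$ becomes a closed \g\ 3-form on $\B$ equal, after a fixed linear change of coordinates, to $\vph_0 + O(r_i)$ in $C^0$, hence as $C^0$-close to $\vph_0$ as desired. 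Over a thin collar of $\del\B$ one interpolates from this form to $\vph_0$ --- which is possible while remaining in the (open) \g-orbit precisely because the two forms are close, at volume cost of the order of the collar width --- and on the remaining smaller ball one inserts $\vph_\ep$. Undoing the rescaling produces on each $\B_i$ a closed \g\ 3-form agreeing with $\ph$ near $\del\B_i$, differing from $\ph$ by $\dd$ of a $2$-form supported in the interior of $\B_i$, and of total volume $\le \ep \int_{\B_i} vol_\ph$, once the collar widths and the $r_i$ are chosen small enough in terms of $\ep$ (and $\ep$ is renamed). Carrying this out on every $\B_i$ and leaving $\ph$ unchanged elsewhere yields $\vph' \in [\ph]^\del_+$ with $\CH^\del(\vph') < \eta + \ep\,\CH^\del(\ph)$; letting $\eta, \ep \to 0$ gives $\inf_{\ph' \in [\ph]^\del_+} \CH^\del(\ph') = 0$, which is the assertion, the case $\del\M = \es$ being included. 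For orbifolds nothing changes: the $\B_i$ may be placed in the (full-measure) smooth locus, or one argues equivariantly in a local uniformising chart near an orbifold point.

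It remains to construct $\vph_\ep$, and this is the crux. First one should see why the obvious ideas fail: a conformal rescaling of $\vph_0$ into $c^3 \vph_0$ with $c \to 0$ is not closed, while a short computation (using the nearly-K\"ahler structure equations on $S^6$) shows that any \emph{closed} \g\ 3-form on $\B$ invariant under the standard linear \g-action is merely a radial reparametrisation of $\vph_0$, hence diffeomorphic to it and of identical volume --- so the induced metric must be degenerated genuinely anisotropically, by deforming the cross-section. Writing $\bb{R}^7 \osr \{0\} = (0,\0) \x S^6$, on which $\vph_0$ is the Riemannian cone over the round nearly-K\"ahler $S^6$, I would seek $\vph_\ep$ of warped generalised-cone type $\vph = \dd t \w \om_t + \rho_t$, where $(\om_t, \rho_t)$ is a one-parameter family of compatible $\SU(3)$-structures on the $S^6$-slices --- so that $\vph$ is, pointwise, a \g\ 3-form, an \emph{open} condition involving $(\om_t,\rho_t)$ but none of their derivatives --- subject to $\rho_t$ closed and $\del_t \rho_t = \dd\om_t$ (jointly equivalent to $\dd\vph = 0$, the $\dd$ here being along $S^6$). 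Along such a family the cross-sectional volume $\int_{S^6} \om_t^3 / 6$ is unconstrained: the plan is to drive it down to size $\ep$ over the bulk of the parameter interval, return it to the round model near the ends, cap off the short inner core (where $\vph_\ep$ reverts to a rescaled $\vph_0$, smooth at the origin and of negligible volume), match $\vph_0$ across a thin outer shell, and confine the transitions between the collapsed regime and these two model pieces to narrow bands, so that $\int_\B vol_{\vph_\ep} = O(\ep)$. The main obstacle is producing such a family by hand: the equation $\del_t \rho_t = \dd\om_t$ couples the evolutions of $\rho_t$ and $\om_t$, while $\SU(3)$-compatibility forces $\om_t$ to be of type $(1,1)$ for the almost complex structure determined by $\rho_t$ and fixes $\om_t^3$ in terms of $\rho_t$; one must exhibit, by explicit formulae in the local model, a compatible collapsing solution of this constrained system and check that it remains strictly inside the \g-orbit throughout --- in particular across the transition bands, which must be arranged to contribute only negligibly to the volume.
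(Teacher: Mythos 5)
Your overall framework (local model lemma plus a covering argument, with a Poincar\'e-lemma-with-estimates cutoff to glue a model form into small geodesic balls) is sound, but there is a genuine gap at exactly the point you identify as the crux: the local lemma is never proved. You need, for every \(\ep>0\), a closed \g\ 3-form \(\vph_\ep\) on \(\ol{\B}_1\) agreeing with \(\vph_0\) near the boundary and of volume \(<\ep\), and your proposed construction --- a collapsing family of \(\SU(3)\)-structures \((\om_t,\rho_t)\) on the \(S^6\)-slices satisfying \(\del_t\rho_t=\dd\om_t\) together with the pointwise compatibility and positivity constraints, matched to a rescaled cone at the core and to \(\vph_0\) in an outer shell --- is only sketched; you explicitly defer the existence of such a family and the verification that it stays in the open \g-orbit through the transition bands. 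That existence is not a routine matter: the constraint \(\del_t\rho_t=\dd\om_t\) couples the two evolutions, and driving \(\int_{S^6}\om_t^3\) to zero while keeping \((\dd t\w\om_t+\rho_t)\) a genuine \g\ 3-form is precisely the kind of collapsing statement whose difficulty is comparable to the theorem itself (indeed, your local lemma is essentially the theorem specialised to \(\M=\ol{\B}_1\)), so as written the argument is circular in difficulty rather than complete.

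It is also worth noting how much weaker a local input the paper's proof gets away with. Instead of collapsing the local volume all the way to \(\ep\), it only exhibits a single compactly supported \(2\)-form, explicitly \(\al^-=f(r)\,\dd x^{12}\), for which the Hessian of the Donaldson--Hitchin functional at the flat, torsion-free (hence critical) form \(\vph_0\) is strictly negative; a Taylor expansion then converts this into a perturbation supported in the ball that lowers the volume by a \emph{fixed fraction} \((1-\ep)\), a rescaling argument (any closed \g\ 3-form is locally uniformly close to \(\vph_0\) at small scales) makes that fraction uniform over all closed \g\ 3-forms, and a Vitali covering plus \emph{iteration} of the whole construction drives \(\CH^\del\) to zero geometrically. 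Your single-pass covering step would work if your strong local lemma were granted (your collar interpolation is essentially the paper's cutoff-of-a-primitive trick, and \(H^3_c(\B)=0\) does give the exact-form condition), but to repair the proposal the most economical fix is to replace the unproven collapsing construction by such a second-order, fixed-percentage decrease and then iterate, rather than to try to build the collapsing \(\SU(3)\)-family by hand.
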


The proof of \tref{UB-Thm} has two notable consequences.  Firstly, it enables the critical points of the functionals \(\CCH^\del\), \(\SCH^\del\) and \(\SCCH^\del\) to be classified.

\begin{Thm}\label{saddle-thm}
The critical points of \(\CCH^\del\), \(\SCH^\del\) and \(\SCCH^\del\) are always saddles.  Specifically, let \(\M\) be a compact, oriented 7-manifold (or, more generally, 7-orbifold) with (possibly empty) boundary and let \(\ps\) be a torsion-free \g\ 4-form on \(\M\).  Then there exist infinite-dimensional subspaces \(\mc{S}_4^\pm\lt(\ps\rt) \pc \T_\ps [\ps]_+^\del\) along which \(\lt.\mc{D}^2\CCH^\del\rt|_\ps\) is positive definite and negative definite, respectively.  The analogous statement holds for the functionals \(\SCH^\del\) and \(\SCCH^\del\).
\end{Thm}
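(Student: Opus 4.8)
The plan is to compute the second variation \(\mc{D}^2\CCH^\del|_\ps\) explicitly at a torsion-free \g\ 4-form \(\ps\), identify it with the integral of a pointwise \g-invariant quadratic form, and then use a covering argument to promote its indefiniteness to the existence of infinite-dimensional definite subspaces.  Since \(\ps\) is torsion-free it is a critical point of \(\CCH^\del\), so \(\mc{D}^2\CCH^\del|_\ps\) is a well-defined symmetric bilinear form on \(\T_\ps [\ps]_+^\del = \dd\Om^3(\M;\del\M)\).  Because \(vol_{\ps'}\) depends algebraically and pointwise on \(\ps'\), expanding \(vol_{\ps+\dd\eta}\) to second order and discarding the first-order term (which integrates to zero by criticality, via Stokes and \(\eta|_{\del\M}=0\)) gives
\[
\mc{D}^2\CCH^\del|_\ps(\dd\eta,\dd\eta) = \int_\M q_{\ps|_x}\!\lt(\dd\eta|_x\rt),
\]
where, for each \(x\), \(q_{\ps|_x}\) is a quadratic form on \(\ww{4}\lt(\T^*_x\M\rt)\) depending only on \(\ps|_x\).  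Two consequences will be used: \(\mc{D}^2\CCH^\del|_\ps\) is \emph{local}, so cross-terms between deformations with disjoint supports vanish; and, since every \g\ 4-form is pointwise \(\GL_+(7;\bb{R})\)-equivalent to \(\vps_0\) while the construction of \(vol\) is \g-equivariant, in any coframe carrying \(\ps|_x\) to \(\vps_0\) one has \(q_{\ps|_x}=q_0\) for a single \g-invariant quadratic form \(q_0\) on \(\ww{4}\lt(\bb{R}^7\rt)^*\).  Decomposing \(\ww{4}\lt(\bb{R}^7\rt)^* = \ww[1]{4}\ds\ww[7]{4}\ds\ww[27]{4}\) into \g-irreducibles, Schur's lemma forces \(q_0 = \ka_1\|\1\|_1^2 + \ka_7\|\1\|_7^2 + \ka_{27}\|\1\|_{27}^2\) for constants \(\ka_\rho\), with \(\ka_1>0\) (from the positive homogeneity of \(vol\)).

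The key is a local model on a ball.  On \(B^7\), every compactly supported closed \(4\)-form is exact (since \(H^4_c(\bb{R}^7)=0\)), and since \(\widehat{\dd\beta}(\xi)=i\,\xi\w\h\beta(\xi)\), the Fourier transform of a closed \(4\)-form takes values, at each frequency \(\xi\ne0\), in the rank-one cone \(\lt\{\xi\w\rho ~\m|~ \rho\in\ww{3}\lt(\bb{R}^7\rt)^*\rt\}\).  By Plancherel, \(\int_{B^7}q_0(\beta) = c\int q_0\!\lt(\h\beta(\xi)\rt)\dd\xi\) for a positive constant \(c\), so \(\mc{D}^2\CCH^\del|_\ps\) restricted to the exact forms supported in a small ball is indefinite precisely when \(q_0\) is indefinite on the rank-one cone \(\lt\{\xi\w\rho\rt\}\).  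I would verify this last point by explicit computation with \(\vps_0\) --- the same local analysis powering \tref{UB-Thm}, which furnishes the signs of \(\ka_7,\ka_{27}\) via the linearisation of \(\ps\mt\Hs_\ps\ps\) at \(\vps_0\) --- noting e.g.\ that \(\ww[7]{4}\) consists of rank-one forms (it is the image of wedging a \(1\)-form with the invariant \(3\)-form), which pins one sign, while a second, explicitly chosen rank-one \(4\)-form pins the opposite sign.  Concretely this produces, in any ball \(B\pc\M\osr\del\M\) compactly contained in the smooth locus, compactly supported \(\eta^+_B,\eta^-_B\in\Om^3(\M;\del\M)\) --- obtained as \(\dd\)-potentials of rapidly oscillating closed forms concentrated at the relevant frequencies --- with \(\mc{D}^2\CCH^\del|_\ps\!\lt(\dd\eta^+_B,\dd\eta^+_B\rt)>0\) and \(\mc{D}^2\CCH^\del|_\ps\!\lt(\dd\eta^-_B,\dd\eta^-_B\rt)<0\).

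Now choose countably many pairwise disjoint such balls \(B_i\pc\M\osr\del\M\), set \(\eta^{(i),\pm}:=\eta^\pm_{B_i}\), and put \(\mc{S}_4^+(\ps):=\operatorname{span}\lt\{\dd\eta^{(i),+} ~\m|~ i\in\bb{N}\rt\}\) and \(\mc{S}_4^-(\ps):=\operatorname{span}\lt\{\dd\eta^{(i),-} ~\m|~ i\in\bb{N}\rt\}\).  These are infinite-dimensional because the \(\dd\eta^{(i),\pm}\) are nonzero with pairwise disjoint supports, hence linearly independent; and by locality, for a finite combination \(v=\sum_i c_i\dd\eta^{(i),+}\) all cross-terms vanish, so \(\mc{D}^2\CCH^\del|_\ps(v,v)=\sum_i c_i^2\,\mc{D}^2\CCH^\del|_\ps\!\lt(\dd\eta^{(i),+},\dd\eta^{(i),+}\rt)>0\) unless \(v=0\); thus \(\mc{D}^2\CCH^\del|_\ps\) is positive definite on \(\mc{S}_4^+(\ps)\), and symmetrically negative definite on \(\mc{S}_4^-(\ps)\).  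The identical argument applies to \(\SCH^\del\) and \(\SCCH^\del\): the relevant exterior powers of \(\bb{R}^7\) again split as \(1\ds7\ds27\) under \(\tld{\Gg}_2\), the invariant quadratic forms are scalar on each summand, and the sign data on the rank-one cone come from the explicit \(\tld{\Gg}_2\)-models used for those functionals in the proof of \tref{UB-Thm}.

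The main obstacle is the algebraic verification in the second paragraph: computing \(\ka_7,\ka_{27}\) and, above all, exhibiting rank-one \(4\)-forms (and their \(\tld{\Gg}_2\)-analogues) on which \(q_0\) takes both signs.  Were \(q_0\) instead semi-definite on the rank-one cone, only one of \(\mc{S}_4^\pm(\ps)\) would survive, degenerating to the one-sided conclusion of \tref{MT'} --- which is indeed the mechanism behind the contrast between \tref{UB-Thm} and \tref{MT'}, since for \(\CH^\del\) the corresponding positivity on the rank-one cone of \(3\)-forms obstructs the construction of an analogue of \(\mc{S}_4^+\).
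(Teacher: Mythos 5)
Your proposal is correct and follows essentially the paper's own route: the local exact directions of both signs that you produce (positive along \(\Om^4_7\)-type perturbations, negative along a rank-one perturbation dominated by the \(27\)-component) are exactly what \lref{P0} and \lref{P} construct --- with the sign data coming from \pref{Hitchin-derivs}'s pointwise density \(\tfrac34\|\pi_1\|^2+\|\pi_7\|^2-\|\pi_{27}\|^2\), and the non-flatness of a general torsion-free \(\ps\) handled there by a rescaling argument rather than by your oscillatory/Plancherel device --- after which the paper concludes precisely as you do, spanning countably many such perturbations supported in disjoint balls inside \(\M\osr(\del\M\cup\M_{sing})\) and using locality of the Hessian to annihilate cross-terms. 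The only slip is in your closing aside: for \(\CH^\del\) what obstructs an analogue of \(\mc{S}^+_3\) is the non-positivity (critical points being local maxima), not positivity, of the Hessian density on exact directions.
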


I remark that, in contrast, the corresponding result for \(\CH^\del\) is known to be false in general.  E.g. in the case where \(\M\) is a closed manifold, the critical points are known to be local maxima by \cite{TGo3Fi6&7D} (see \cite[Prop.\ 1]{BVPiD74&3RtEH} and \cite{AEBVPfG2S} for a discussion of the case where \(\M\) has non-empty boundary).  It is this difference between \(\CH^\del\) and the functionals \(\CCH^\del\), \(\SCH^\del\) and \(\SCCH^\del\) which means that, currently, the unboundedness above of the functional \(\CH^\del\) remains an open problem.

Secondly, let \(\M\) be an oriented 7-manifold.  Recall that, given a closed \g\ 4-form \(\ps\) on \(\M\), the Laplacian coflow of \(\ps\) is defined to be the solution of the evolution PDE:
\ew
\frac{\del \ps(t)}{\del t} = \De_{\ps(t)}\ps(t) = \dd\dd^*_{\ps(t)}\ps(t) \et \ps(0) = \ps.
\eew
(Note that I adopt the sign convention for Laplacian coflow used in \cite{STBoaMLCoG2S}, rather than that used in the the original paper \cite{SSftLCosG2SwS}.)  Whilst the existence and uniqueness of the Laplacian coflow have yet to be proven, the flow can be regarded as the gradient flow of the Hitchin functional \(\CCH\) \cite{STBoaMLCoG2S}.  Consequently, \trefs{UB-Thm} and \ref{saddle-thm} intuitively suggest that most solutions of the Laplacian coflow on a given manifold \(\M\) (when they exist) will not converge to a torsion-free \g\ 4-form as \(t \to \infty\).  The following result confirms this expectation:
\begin{Thm}\label{donco-thm}
Let \(\M\) be a (possibly non-compact) oriented 7-manifold without boundary and let \(\ps \in \Om^4_+(\M)\) be a closed \g\ 4-form.  Consider the space:\vs{-2mm}
\ew
\mc{O}_{[\ps]_+} = \bigg\{\ps'\in[\ps]_+~\bigg| \parbox{7.5cm}{\begin{center}no solution to the Laplacian coflow started at \(\ps'\) converges to a torsion-free \g\ 4-form\end{center}}\bigg\}.
\eew
Then \(\mc{O}_{[\ps]_+} \pc [\ps]_+\) is dense in the \(C^0\) topology.
\end{Thm}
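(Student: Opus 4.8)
The plan is to reduce the statement to the unboundedness \emph{above} of $\CCH$ established in \tref{UB-Thm}, using the variational structure of the coflow. Recall from \cite{STBoaMLCoG2S} that, with the sign convention adopted here, the Laplacian coflow is the gradient flow of $\CCH$; hence along \emph{any} coflow solution $\ps(t)$ — for as long as it exists — the map $t \mapsto \CCH(\ps(t))$ is non-decreasing, and $\ps(t)-\ps$ remains exact, so $\ps(t) \in [\ps]_+$ for all $t$. It follows that if the coflow started at some $\ps' \in [\ps]_+$ converges to a torsion-free \g\ 4-form $\ps_\0$, then $\ps_\0 \in [\ps]_+$ and, passing the monotonicity to the limit (via Fatou's lemma on an exhaustion of $\M$ in the non-compact case), $\CCH(\ps') \le \CCH(\ps_\0)$. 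Set $\La := \sup\{\CCH(\ps_\0) : \ps_\0 \in [\ps]_+ \text{ torsion-free}\}$; if $[\ps]_+$ contains no torsion-free form then $\mc O_{[\ps]_+} = [\ps]_+$ and there is nothing to prove, so assume otherwise. Then \tref{saddle-thm} (with $\del\M = \es$) shows that the Hessian of $\CCH$ at any torsion-free $\ps_\0$ has kernel exactly the tangent space to the torsion-free locus of $[\ps]_+$, so $\CCH$ is locally constant there; combined with the finiteness of the number of components of that locus — or, when $\M$ is non-compact, with the analogous statement for the relative volume $\ps' \mapsto \CCH(\ps') - \CCH(\ps)$ — this gives $\La < \0$. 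The upshot is the key reduction, insensitive to the (open) existence and uniqueness theory of the coflow: \emph{every $\ps' \in [\ps]_+$ with $\CCH(\ps') > \La$ lies in $\mc O_{[\ps]_+}$.}

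It therefore suffices to show that $\{\ps' \in [\ps]_+ : \CCH(\ps') > \La\}$ is $C^0$-dense, and here one invokes the explicit local surgeries underlying \tref{UB-Thm}. On any coordinate ball $B \pc \M$ and for any $\eta > 0$, these produce a $3$-form $\be$ supported in $B$ with $\|\dd\be\|_{C^0} < \eta$ and $\int_B(vol_{\ps'+\dd\be} - vol_{\ps'}) > 0$; the point is that such modifications are $C^0$-small but \emph{not} $C^1$-small — they insert a rescaled local model — so a $C^0$-small perturbation can still alter $\CCH$ macroscopically. Given $\ps' \in [\ps]_+$ and $\ep > 0$, fix a countable, pairwise disjoint, locally finite family of coordinate balls $\{B_k\}$ in the (non-compact) manifold $\M$ and perform one such surgery $\be_k$ in each $B_k$, of uniform $C^0$-size $< \ep$ and with $\int_{B_k}(vol_{\ps'+\dd\be_k} - vol_{\ps'}) \ge c_k > 0$, $\sum_k c_k = \0$. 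Then $\ps'' := \ps' + \dd\big(\sum_k \be_k\big)$ lies in $[\ps]_+$, satisfies $\|\ps''-\ps'\|_{C^0} < \ep$, and has $\CCH(\ps'') = \0 > \La$, so $\ps'' \in \mc O_{[\ps]_+}$ by the reduction above.

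The main obstacle is that this last step genuinely uses the non-compactness of $\M$: when $\M$ is compact, $\CCH$ is $C^0$-continuous on $[\ps]_+$, so no $C^0$-small perturbation changes it macroscopically, and the compact case must be handled by a separate, softer argument — I would expect one that exploits the infinite-dimensionality of the positive-definite (hence, for the gradient flow, unstable) directions provided by \tref{saddle-thm} to show directly that the set of initial data flowing to a torsion-free limit is nowhere $C^0$-dense. A secondary point needing care is the precise meaning of $\CCH$, its monotonicity along the coflow, and the mode of convergence on a non-compact manifold, where $\CCH(\ps')$ may be $\0$ — in which case $\ps'$ is already in $\mc O_{[\ps]_+}$ by the reduction.
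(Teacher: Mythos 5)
Your proposal has a genuine gap, and it sits exactly at the load-bearing step. The reduction rests on the finiteness of \(\La=\sup\{\CCH(\h\ps):\h\ps\in[\ps]_+\ \text{torsion-free}\}\), which is not substantiated: \tref{saddle-thm} only provides infinite-dimensional positive- and negative-definite subspaces for the Hessian at a torsion-free point; it says nothing about the Hessian kernel, about local constancy of \(\CCH\) on the torsion-free locus, or about that locus having finitely many components, so the inference \(\La<\0\) has no support in the paper (and is not known in general). Moreover, on a non-compact \(\M\) the volumes involved may all be infinite, in which case \(\La=\0\) and both the comparison \(\CCH(\ps'')=\0>\La\) and your remark that \(\CCH(\ps')=\0\) already forces \(\ps'\in\mc{O}_{[\ps]_+}\) are vacuous; the "relative volume" repair is only gestured at, not carried out. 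Finally, your density step needs infinitely many disjoint surgeries with divergent total volume gain, which, as you yourself concede, cannot work on a compact \(\M\) (there \(\CCH\) is \(C^0\)-continuous), and for the compact case you offer only the expectation of a "separate, softer argument". So as written the argument establishes the theorem in neither the compact nor the non-compact setting.

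The paper's proof avoids any global comparison with the set of torsion-free limits, and this is the idea your proposal is missing. It first shows (via an explicit primitive with quadratic bounds) that any \(\ps'\in[\ps]_+\) can be modified by a \(C^0\)-small exact perturbation, supported near a point, so as to agree exactly with the flat model \(\ps_0\) on a small ball; one then adds a further small perturbation \(s\,\dd\al^+_\eta\) supported in that ball along a direction where the Hessian of \(\CCH[,\ol{\B}_\eta]^\del\) is positive. If a coflow solution from such data converged to a torsion-free \(\h\ps\), then (since the coflow fixes the flat form where the initial datum is flat) \(\h\ps\) would agree with \(\ps_0\) near the boundary sphere, and a Heintze--Karcher comparison, using Ricci-flatness of \(g_{\h\ps}\), gives \(\CCH[,\ol{\B}_\eta]^\del(\h\ps)\le\CCH[,\ol{\B}_\eta]^\del(\ps_0)\); this contradicts the pointwise volume monotonicity of the coflow, because the initial datum already has strictly larger volume on the ball. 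This purely local obstruction replaces your global bound \(\La\), works uniformly in the compact and non-compact cases, and (unlike the Fatou step you sketch) only uses the pointwise volume increase along the flow.
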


The results of this paper were obtained during the author's doctoral studies, which where supported by EPSRC Studentship 2261110.

\section{Preliminaries}

\subsection{Orbifolds}

The material in this subsection is largely based on \cite[\S\S1.1--1.3]{O&ST}, \cite[\S14.1]{THKLFPFftScDO} and \cite[\S1]{OOC}.  Let \(\M\) be a topological space and write:
\ew
\bb{R}^n_0 = \lt\{ \lt(x^1,...,x^n\rt) \in \bb{R}^n ~\m|~ x^n = 0 \rt\}, \hs{3mm} \bb{R}^n_+ = \lt\{ \lt(x^1,...,x^n\rt) \in \bb{R}^n ~\m|~ x^n \ge 0 \rt\}.
\eew
\begin{Defn}
An \(n\)-dimensional orbifold chart with boundary is the data \(\Xi = \lt(U,\Ga,\tld{U},\ch\rt)\) of a connected, open neighbourhood \(U\) in \(\M\), a finite subgroup \(\Ga \pc \GL(n;\bb{R})\), a connected, \(\Ga\)-invariant open neighbourhood \(\tld{U}\) of \(0\) in either \(\bb{R}^n\) or \(\bb{R}^n_+\) and a homeomorphism \(\ch: \lqt{\tld{U}}{\Ga} \to U\).  Write \(\tld{\ch}\) for the composite \(\tld{U} \oto{quot} \lqt{\tld{U}}{\Ga} \oto{\ch} U\).  Say that \(\Xi\) is centred at \(x \in \M\) if \(x = \tld{\ch}(0)\).   In this case, \(\Ga\) is called the orbifold group of \(x\), denoted \(\Ga_x\).  \(x\) is called a smooth point if \(\Ga_x = 0\), and a singular point if \(\Ga_x \ne 0\).

Now consider two orbifold charts with boundary \(\Xi_1 = \lt(U_1,\Ga_1,\tld{U}_1,\ch_1\rt)\) and \(\Xi_2 = \lt(U_2,\Ga_2,\tld{U}_2,\ch_2\rt)\) of dimensions \(n\) and \(m\), with \(U_1 \cc U_2\).   An embedding of \(\Xi_1\) into \(\Xi_2\) is the data of a smooth, open embedding \(\io_{12}:\tld{U}_1 \emb \tld{U}_2\) and a group isomorphism \(\la_{12}: \Ga_1 \to \Stab_{\Ga_2}(\io_{12}(0))\) such that \(\io_{12}(\si \cdot x) = \la_{12}(\si) \cdot \io_{12}(x)\) for all \(x \in \tld{U}_1\) and all \(\si \in \Ga_1\), and such that the following diagram commutes:
\ew
\bcd[column sep = 13mm, row sep = 3mm]
\tld{U}_1 \ar[rr, ^^22 \mla{\io_{12}} ^^22] \ar[dd, ^^22 \mla{\tld{\ch}_1} ^^22] & & \tld{U}_2 \ar[dd, ^^22 \mla{\tld{\ch}_2} ^^22]\\
& & \\
U_1 \ar[rr, ^^22\mla{incl}^^22, hook] &  & U_2
\ecd
\eew
Additionally, if \(\tld{U}_1 \cc \bb{R}^n_+\), require that \(\tld{U}_2 \cc \bb{R}^m_+\) and:
\ew
\io_{12}\lt(\tld{U}_1 \cap \bb{R}^n_0\rt) \cc \tld{U}_2 \cap \bb{R}^m_0.
\eew
In particular, note that whenever \(\Xi_1 \emb \Xi_2\), the dimensions \(n\) and \(m\) must necessarily be equal.

Now let \(\Xi_1\) and \(\Xi_2\) be arbitrary.  \(\Xi_1\) and \(\Xi_2\) are compatible if for every \(x \in U_1 \cap U_2\), there exists a chart \(\Xi_x = \lt(U_x,\Ga_x,\tld{U}_x,\ch_x\rt)\) centred at \(x\) together with embeddings \((\io_{x1},\la_{x1}): \Xi_x \emb \Xi_1\) and \((\io_{x2},\la_{x2}): \Xi_x \emb \Xi_2\).  If \(U_1 \cap U_2 = \es\), then \(\Xi_1\) and \(\Xi_2\) are automatically compatible, however if \(U_1 \cap U_2 \ne \es\) and \(\Xi_1\) and \(\Xi_2\) are compatible, then \(\Xi_1\) and \(\Xi_2\) have the same dimension; moreover, if \(\Xi_1\) and \(\Xi_2\) are centred at the same point \(x \in \M\), then \(\Ga_1 \cong \Ga_2\); thus, the orbifold group \(\Ga_x\) is well-defined up to isomorphism.

An atlas for \(\M\) is a collection \(\fr{A}\) of compatible orbifold charts with boundary which is maximal in the sense that if a chart \(\Xi\) is compatible with every chart in \(\fr{A}\), then \(\Xi \in \fr{A}\).  An orbifold with boundary is a connected, Hausdorff, second-countable topological space \(\M\) equipped with an atlas \(\fr{A}\).  Every chart of \(\M\) has the same dimension \(n\); term this common dimension the dimension of the orbifold.
\end{Defn}

If \(\Ga_x = {\bf 1}\) for all \(x \in \M\), then \(\M\) may naturally be regarded as a manifold; in this way, all of the results obtained in this paper for orbifolds automatically imply the corresponding result for manifolds.  More generally, given an arbitrary orbifold \(\M\), the set of all points \(x \in \M\) such that \(\Ga_x = {\bf 1}\) forms an open and dense subset of \(\M\), denoted \(\M_{smooth}\).  Its connected components naturally inherit the structure of smooth manifolds with boundary.  Its complement is known as the singular locus of \(\M\), denoted \(\M_{sing}\), and has positive codimension in \(\M\).  Likewise, the set of all points \(x \in \M\), such that there exists an orbifold chart with boundary \(\Xi = \lt(U,\Ga,\tld{U},\ch\rt)\) centred at \(x\) with \(\tld{U} \cc \bb{R}^n_+\), forms a closed subset termed the boundary of \(\M\), denoted \(\del\M\).  Its connected components naturally inherit the structure of orbifolds without boundary, i.e.\ orbifolds whose boundary is empty.

Let \(\M\) be an \(n\)-orbifold with boundary.  Given any chart \(\Xi = \lt(U, \Ga, \tld{U}, \ch\rt)\) for \(\M\), the action of \(\Ga\) on \(\tld{U}\) naturally lifts to an action of \(\Ga\) on \(\T\tld{U}\) by bundle automorphisms.  Given a second chart \(\Xi' = (U',\Ga',\tld{U}',\ch')\) embedding into \(\Xi\), the map \(\tld{U}' \emb \tld{U}\) induces an equivariant embedding of bundles \(\T\tld{U}' \emb \T\tld{U}\).  Define:
\ew
\T\M = \rqt{\lt[\coprod_{\Xi} \lt(\lqt{\T\tld{U}}{\Ga}\rt)\rt]}{\tl}
\eew
where the quotient by \(\tl\) denotes that one should glue along all embeddings \(\T\tld{U}' \emb \T\tld{U}\).  The resulting space \(\T\M\) is an orbifold with boundary, and the natural projection \(\pi: \T\M \to \M\) gives \(\T\M\) the structure of an orbifold vector bundle over \(\M\) (see \cite[p.\ 173]{THKLFPFftScDO} and \cite[p.\ 302]{OOC}).  \(\T\M\) is termed the tangent bundle of \(\M\).  Given \(x \in \M\), the tangent space at \(x\) denoted \(\T_x\M\) is the preimage of \(x\) under the map \(\T \M \to \M\).  It may be identified with the quotient space \(\lqt{\bb{R}^n}{\Ga_x}\), where \(\Ga_x\) is the orbifold group at \(x\).  A section of the tangent bundle is then simply a continuous map \(X: \M \to \T \M\) such that, for each local chart \(\Xi = \lt(U, \Ga, \tld{U}, \ch\rt)\) for \(\M\), there is a smooth, \(\Ga\)-invariant, vector field \(X': \tld{U} \to \T \tld{U}\) such that the following diagram commutes:
\e\label{sec-lift}
\bcd[column sep = 13mm, row sep = 3mm]
\tld{U} \ar[rr, ^^22 \mla{X'} ^^22] \ar[dd, ^^22 \mla{\tld{\ch}} ^^22] & & \T \tld{U} \ar[dd]\\
& &\\
U \ar[rr, ^^22 \mla{X} ^^22] &  & \pi^{-1}(U)
\ecd
\ee
In particular, \(X'|_0\) is invariant under the action of \(\Ga\), i.e.\ \(\Ga \cc \Stab_{\GL_+(n;\bb{R})}(X'|_0)\).  In a similar way, one may define the cotangent bundle of an orbifold with boundary, tensor bundles, bundles of exterior forms etc., with sections of these bundles defined analogously.  \(\M\) is orientable if \(\ww{n}\T^* \M\) is trivial, i.e.\ has a non-vanishing section (in particular, all the orbifold groups of \(\M\) lie in \(\GL_+(n;\bb{R})\)); an orientation is simply a choice of trivialisation.  Riemannian metrics and other geometric structures can be defined similarly.

\subsection{\g- and \sg-structures on orbifolds}\label{(S)G2-prelim}

Let \(\si_0 \in \ww{p}\lt(\bb{R}^n\rt)^*\) and let \(\M\) be an oriented \(n\)-orbifold, possibly with boundary.  A \(p\)-form \(\si \in \Om^p(\M)\) is termed a \(\si_0\)-form if, for each chart \(\Xi = \lt(U,\Ga,\tld{U},\ch\rt)\) for \(\M\), there exists a \(\Ga\)-invariant lift \(\si'\) of \(\si\) to \(\Om^p\lt(\tld{U}\rt)\) (as in \eref{sec-lift}) with the property that, for each \(x \in \tld{U}\), there exists an orientation-preserving isomorphism \(\al: \T_x\tld{U} \to \bb{R}^n\) satisfying \(\al^*\si_0 = \si'|_x\).  \(\si_0\)-forms are termed stable if the \(\GL_+(n;\bb{R})\)-orbit of \(\si_0\) in \(\ww{p}\lt(\bb{R}^n\rt)^*\) is open; in this case, all sufficiently small perturbations of \(\si_0\)-forms are also \(\si_0\)-forms.

In this terminology, \g\ 3-forms are simply \(\vph_0\)-forms, for \(\vph_0\) as defined in the introduction.  As stated in the introduction, \g\ 3-forms are stable.  Indeed, since the stabiliser of \(\vph_0\) in \(\GL_+(7;\bb{R})\) is isomorphic to \g, it is 14-dimensional and thus the \(\GL_+(7;\bb{R})\)-orbit of \(\vph_0\) in \(\ww{3}\lt(\bb{R}^7\rt)^*\) has dimension:
\ew
\dim\GL_+(7;\bb{R}) - \dim\Gg_2 = 49 - 14 = 35 = \dim\ww{3}\lt(\bb{R}^7\rt)^*,
\eew
implying stability, as claimed.  Given a \g\ 3-form \(\ph\in\Om^3_+(\M)\), the 4-form \(\Hs_\ph\ph\) is automatically a \g\ 4-form, i.e.\ a \(\vps_0\)-form for:
\ew
\vps_0 = \Hs_0\vph_0 = \th^{4567} + \th^{2367} + \th^{2345} + \th^{1357} - \th^{1346} - \th^{1256} - \th^{1247} \in\ww{4}(\bb{R}^7)^*,
\eew
where \(\Hs_0\) denotes the Hodge star on the vector space \(\bb{R}^7\), defined by the Euclidean inner product and the usual choice of orientation.  The stabiliser of \(\vps_0\) in \(\GL_+(7;\bb{R})\) is also \g\ and thus \g\ 4-forms are also stable.  Moreover, \g-structures on oriented orbifolds can equivalently be characterised by either \g\ 3-forms or \g\ 4-forms (see \cite[p.\ 4]{OOC} for a definition of principal bundles on orbifolds).  Write \(\ww[+]{p}\T^*\M\) for the bundle of \g\ \(p\)-forms on \(\M\) (\(p = 3,4\)) and \(\Om^p_+\) for its corresponding sheaf of sections.

Likewise, consider the 3-form:
\ew
\svph_0 = \th^{123} - \th^{145} - \th^{167} + \th^{246} - \th^{257} - \th^{347} - \th^{356} \in\ww{3}(\bb{R}^7)^*,
\eew
where \(\svph_0\) differs from \(\vph_0\) only in the sign of the second and third terms.  \(\svph_0\)-forms are termed \sg\ 3-forms.  The stabiliser of \(\svph_0\) in \(\GL_+(7;\bb{R})\) is isomorphic to \sg\ and hence \sg\ 3-forms are equivalent to \sg-structures on \(\M\).  Since \sg\ is also 14-dimensional, \sg\ 3-forms are also stable.  Moreover, since \(\tld{\Gg}_2 \pc \SO(3,4)\) preserves the indefinite inner product \(\tld{g}_0 = \sum_{i=1}^3 \lt(\th^i\rt)^{\ts2} - \sum_{i=4}^7 \lt(\th^i\rt)^{\ts2}\) and Euclidean volume element \(vol_0\) on \(\bb{R}^7\), \(\sph\) induces a metric \(g_{\sph}\) of signature \((3,4)\) and a volume form \(vol_{\sph}\), and hence a Hodge star operator \(\Hs_{\sph}\) (see \cite[Ch.\ 1.C]{EM} for an exposition of the elementary properties of pseudo-Riemannian metrics).  The 4-form \(\Hs_{\sph}\sph\) is automatically a \(\svps_0\)-form for:
\ew
\svps_0 = \tld{\Hs}_0\svph_0 = \th^{4567} - \th^{2367} - \th^{2345} + \th^{1357} - \th^{1346} - \th^{1256} - \th^{1247} \in\ww{4}(\bb{R}^7)^*,
\eew
where \(\tld{\Hs}_0\) denotes the Hodge star on the vector space \(\bb{R}^7\), defined by the indefinite inner product \(\tld{g}_0\) and the usual choice of orientation.  Such 4-forms are termed \sg\ 4-forms and are, again, stable and equivalent to \sg-structures on \(\M\).  Write \(\ww[\tl]{p}\T^*\M\) for the bundle of \sg\ \(p\)-forms on \(\M\) (\(p = 3,4\)) and \(\Om^p_{\tl}\) for its corresponding sheaf of sections.

Next, consider the action of \g\ on \(\bb{R}^7\).  The spaces \(\ww{0}\lt(\bb{R}^7\rt)^*\), \(\ww{1}\lt(\bb{R}^7\rt)^*\), \(\ww{6}\lt(\bb{R}^7\rt)^*\) and \(\ww{7}\lt(\bb{R}^7\rt)^*\) are simple \g-modules, however the remaining exterior powers of \(\lt(\bb{R}^7\rt)^*\) are reducible, decomposing into simple modules as follows:
\e\label{G2TD}
\ww{2}\lt(\bb{R}^7\rt)^* &= \ww[7]{2}\lt(\bb{R}^7\rt)^* \ds \ww[14]{2}\lt(\bb{R}^7\rt)^*;\\
\ww{3}\lt(\bb{R}^7\rt)^* &= \ww[1]{3}\lt(\bb{R}^7\rt)^* \ds \ww[7]{3}\lt(\bb{R}^7\rt)^* \ds \ww[27]{3}\lt(\bb{R}^7\rt)^*;\\
\ww{4}\lt(\bb{R}^7\rt)^* &= \ww[1]{4}\lt(\bb{R}^7\rt)^* \ds \ww[7]{4}\lt(\bb{R}^7\rt)^* \ds \ww[27]{4}\lt(\bb{R}^7\rt)^*;\\
\ww{5}\lt(\bb{R}^7\rt)^* &= \ww[7]{5}\lt(\bb{R}^7\rt)^* \ds \ww[14]{5}\lt(\bb{R}^7\rt)^*,
\ee
where the subscript in each case denotes the dimension of the module and:
\e\label{G2TDEx}
\begin{aligned}
\ww[7]{2}\lt(\bb{R}^7\rt)^* &= \lt\{v \hk \vph_0 ~\m|~ v \in \bb{R}^7 \rt\} \cong \bb{R}^7;\\
\ww[14]{2}\lt(\bb{R}^7\rt)^* &= \lt\{\al \in \ww{2}\lt(\bb{R}^7\rt)^*~\m|~\al \w \vps_0 = 0\rt\} \cong \fr{g}_2;\\
\ww[1]{3}\lt(\bb{R}^7\rt)^* &= \bb{R} \1 \vph_0 \cong \bb{R};\\
\ww[7]{3}\lt(\bb{R}^7\rt)^* &= \lt\{v \hk \vps_0 ~\m|~ v\in \bb{R}^7 \rt\} \cong \bb{R}^7;\\
\ww[27]{3}\lt(\bb{R}^7\rt)^* &= \lt\{a \in \ww{3}\lt(\bb{R}^7\rt)^* ~\m|~ a\w \vph_0 = 0 \,\text{and}\, a \w \vps_0 = 0\rt\} \cong \ss[0]{2}\lt(\bb{R}^7\rt)^*.
\end{aligned}
\ee
(The notation \(\ss[0]{2}\lt(\bb{R}^7\rt)^*\) refers to the space of trace-free symmetric bilinear forms on \(\bb{R}^7\), where the trace is computed using the metric \(g_0\).)  Any two simple modules of a given dimension are isomorphic; in particular:
\ew
\Hs_0:\ww[q]{p}\lt(\bb{R}^7\rt)^* \to \ww[q]{7-p}\lt(\bb{R}^7\rt)^*
\eew
is a \g-equivariant isomorphism for each \(p = 0,...,7\) and corresponding \(q\); thus explicit expressions for all the simple modules occurring in \eref{G2TD} can be deduced from \eref{G2TDEx} via Hodge star.  As is customary, I write \(\pi_d:\ww{\pt}\lt(\bb{R}^7\rt)^*\to\ww[d]{\pt}\lt(\bb{R}^7\rt)^*\) for the \(g_0\)-orthogonal projection; since for each exterior power no subscript occurs more than once, no ambiguity should arise from this notation.  Exterior forms lying in a particular space \(\ww[q]{p}\lt(\bb{R}^7\rt)^*\) are said to be of definite type, and for an arbitrary \(p\)-form \(\si\) the decomposition \(\si = \sum_q \pi_q(\si)\) is called the type decomposition of \(\si\).  Since the representation theories of the groups \g\ and \sg\ are identical, each coinciding with the representation theory of \(\fr{g}_{2,\bb{C}}\), \sg\ also induces a type decomposition of exterior forms which is entirely analogous to the \g\ type decomposition described above.

Now let \(\M\) be an oriented 7-orbifold (possibly with boundary) and let \(\ph\) be a \g\ or \sg\ 3-form on \(\M\) (the case of 4-forms is similar).  Given a chart \(\Xi = \lt(U, \Ga, \tld{U}, \ch\rt)\) for \(\M\), writing \(\ph'\) for the lift of \(\ph\) to \(\tld{U}\) as in \eref{sec-lift}, \(\ph'\) induces a \(\Ga\)-invariant decomposition \(\ww{p}\T^*\tld{U} \cong \Ds_q \ww[q]{p}\T^*\tld{U}\) and hence one naturally obtains a decomposition \(\ww{p}\T^*\M = \Ds_q \ww[q]{p}\T^*\M\), where each \(\ww[q]{p}\T^*\M\) is an orbifold vector bundle over \(\M\), with fibre over \(x\) isomorphic to \(\lqt{\ww[q]{p}\lt(\bb{R}^7\rt)^*}{\Ga_x}\) (which is well-defined, since \(\Ga_x\) is isomorphic to a subgroup of \g\ or \sg\ as appropriate; see the discussion immediately after \eref{sec-lift}).  Thus, \(\ph\) naturally induces a type decomposition on the exterior forms of \(\M\).

Finally, recall the functional \(\CH^\del\) defined in the introduction.  In a similar way, given a closed \g\ 4-form \(\ps\), define:
\e\label{be+-defn}
[\ps]_+^\del = \lt\{ \ps' \in \ps + \dd\Om^3(\M;\del\M) ~\m|~ \ps' \text{ is a \g\ 4-form}\rt\}.
\ee
Likewise, given a closed \sg\ 3-form \(\sph\) or \sg\ 4-form \(\sps\), define \sg-analogues \(\lt[\sph\rt]_{\tl}^\del\) and \(\lt[\sps\rt]_{\tl}^\del\) of \([\ph]_+^\del\) and \([\ps]_+^\del\), respectively.  One can then define functionals \(\CCH^\del\), \(\SCH^\del\) and \(\SCCH^\del\) on \([\ps]_+^\del\), \(\lt[\sph\rt]_{\tl}^\del\) and \(\lt[\sps\rt]_{\tl}^\del\) by integrating \(vol_\si\) over \(\M\), for \(\si \in [\ps]_+^\del\), \(\lt[\sph\rt]_{\tl}^\del\) or \(\lt[\sps\rt]_{\tl}^\del\) as appropriate.  Since \g\ 3-forms are stable and \(\M\) is compact, the subset \([\ph]_+^\del \pc \ph + \dd\Om^2(\M;\del\M)\) is open in the \(C^0\)-topology and thus one can identify \(\T_{\ph'}[\ph]_+^\del \cong \dd\Om^2(\M;\del\M)\).  A similar argument applies to \([\ps]_+^\del\), \(\lt[\sph\rt]_{\tl}^\del\) and \(\lt[\sps\rt]_{\tl}^\del\).  Using type decomposition, one can explicitly compute the functional derivatives of \(\CH^\del\), \(\CCH^\del\), \(\SCH^\del\) and \(\SCCH^\del\).  This was accomplished for \(\CH^\del = \CH\) in the case of closed manifolds by Hitchin in \cite[Thm.\ 19 \& Lem.\ 20]{TGo3Fi6&7D}\footnote{Note that the formulae for \(\lt.\mc{D}\CH^\del\rt|_\ph\) and \(\lt.\mc{D}^2\CH^\del\rt|_\ph\) above differ from the corresponding formulae in \cite{TGo3Fi6&7D}, as the author of this paper has discovered an error in the numerical factor of \(\frac{7}{18}\) used {\it op.\ cit.}, which has been corrected to \(\frac{1}{3}\) in the formulae here presented.} via arguments which apply equally to compact orbifolds with boundary.  Analogous arguments can be used to compute the first and second derivatives of \(\CCH^\del\); see \cite[Thm.\ 1]{SF&SM} and also \cite[Prop.\ 3.3 \& 3.4]{STBoaMLCoG2S} for the case of closed manifolds.  Since the computation of these derivatives is completely representation-theoretic, and since \g\ and \sg\ have identical representation theories, the formulae for \(\SCH^\del\) and \(\SCCH^\del\) are identical.  Thus one obtains:
\begin{Prop}\label{Hitchin-derivs}
The first and second derivatives of \(\CH^\del\) (equivalently \(\SCH^\del\)) are given by:
\ew
\bcd[row sep = -2mm, column sep = 0pt]
\mc{D}\overset{(\tl)}{\mathcal{H}_3^\del}|_\ph:\dd\Om^2(\M;\del\M) \ar[r] & \bb{R} & & \mc{D}^2\overset{(\tl)}{\mathcal{H}_3^\del}|_\ph:\dd\Om^2(\M;\del\M) \x \dd\Om^2(\M;\del\M) \ar[r] & \bb{R}\\
\si \ar[r, maps to] & \frac{1}{3}\bigint_\M \si \w \Hs_\ph \ph & & (\si_1,\si_2) \ar[r, maps to] & \frac{1}{3}\bigint_\M \si_1 \w \Hs_\ph I_\ph(\si_2)
\ecd
\eew
and the first and second derivatives of \(\CCH^\del\) (equivalently \(\SCCH^\del\)) are given by:
\ew
\bcd[row sep = -2mm, column sep = 0pt]
\mc{D}\overset{(\tl)}{\mathcal{H}_4^\del}|_\ps:\dd\Om^3(\M;\del\M) \ar[r] & \bb{R} & & \mc{D}^2\overset{(\tl)}{\mathcal{H}_4^\del}|_\ps:\dd\Om^3(\M;\del\M) \x \dd\Om^3(\M;\del\M) \ar[r] & \bb{R}\\
\vpi \ar[r, maps to] & \frac{1}{4}\bigint_\M \vpi \w \Hs_\ps\ps & & (\vpi_1,\vpi_2) \ar[r, maps to] & \frac{1}{4}\bigint_\M \vpi_1 \w \Hs_\ps J_\ps(\vpi_2)
\ecd
\eew
where:
\ew
I_\ph(\si) = \frac{4}{3}\pi_1(\si) + \pi_7(\si) - \pi_{27}(\si) \et J_\ps(\si) = \frac{3}{4}\pi_1(\si) + \pi_7(\si) - \pi_{27}(\si).
\eew
(Here, the projections \(\pi_\pt\) are induced by \(\ph\) and \(\ps\), respectively.)
\end{Prop}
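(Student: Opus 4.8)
The plan is to reduce the whole statement to a pointwise, purely representation-theoretic computation (following Hitchin \cite{TGo3Fi6&7D, SF&SM}), and then to observe that this computation is insensitive to replacing manifolds by orbifolds and \(\Gg_2\) by \(\tld{\Gg}_2\). Since \(\CH^\del(\ph) = \int_\M vol_\ph\) and the integrand depends only on the pointwise value of the form --- and, in the orbifold setting, only on a \(\Ga\)-invariant local lift, so that every formula below may first be verified on charts and then glued --- the first two derivatives of \(\CH^\del\) at \(\ph\), in directions \(\si, \si_1, \si_2 \in \dd\Om^2(\M;\del\M) \cong \T_\ph[\ph]^\del_+\), are obtained by differentiating the \emph{pointwise} maps \(\ph \mapsto vol_\ph \in \ww{7}(\bb{R}^7)^*\) and \(\ph \mapsto \Hs_\ph\ph \in \ww{4}(\bb{R}^7)^*\) along the paths \(\ph + t\si\) --- which remain in \([\ph]^\del_+\) for small \(t\) by the openness noted above --- and integrating over \(\M\). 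The argument for \(\CCH^\del\) on 4-forms is entirely parallel, with the roles of \(\ww{3}(\bb{R}^7)^*\) and \(\ww{4}(\bb{R}^7)^*\) interchanged.

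\emph{First derivative.} Here I would argue by equivariance. The pointwise derivative \(\mc{D}vol|_\ph\) is \(\GL_+(7;\bb{R})\)-equivariant, hence \(\Stab(\ph) \cong \Gg_2\)-invariant (rather than merely equivariant) as a linear functional on \(\ww{3}(\bb{R}^7)^*\), since \(\Gg_2 \pc \SL(7;\bb{R})\) acts trivially on \(\ww{7}(\bb{R}^7)^*\); and in \(\ww{3}(\bb{R}^7)^* = \ww[1]{3}(\bb{R}^7)^* \ds \ww[7]{3}(\bb{R}^7)^* \ds \ww[27]{3}(\bb{R}^7)^*\), only the trivial summand \(\ww[1]{3}(\bb{R}^7)^* = \bb{R}\1\ph\) carries a nonzero \(\Gg_2\)-invariant functional. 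Since \(\si \mapsto \si \w \Hs_\ph\ph = \langle\si,\ph\rangle_\ph\, vol_\ph\) is such a functional (it sees only \(\pi_1\si\)), we obtain \(\mc{D}vol|_\ph(\si) = c\, \si \w \Hs_\ph\ph\) for a universal constant \(c\). To pin down \(c\), I would use homogeneity: \(\ph \mapsto vol_\ph\) is homogeneous of degree \(\tfrac{7}{3}\) in \(\ph\) (rescaling \(\ph \mapsto t\ph\) rescales the induced coframe by \(t^{1/3}\)), so Euler's identity gives \(\mc{D}vol|_\ph(\ph) = \tfrac73 vol_\ph\), while \(\ph \w \Hs_\ph\ph = |\ph|_\ph^2\, vol_\ph = 7\, vol_\ph\); hence \(c = \tfrac13\). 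Integrating gives the stated formula for \(\mc{D}\CH^\del\), and the identical argument, now with \(vol\) homogeneous of degree \(\tfrac74\) in \(\ps\), gives \(\mc{D}\CCH^\del|_\ps(\vpi) = \tfrac14 \int_\M \vpi \w \Hs_\ps\ps\).

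\emph{Second derivative.} Differentiating \(\mc{D}\CH^\del|_\ph(\si_1) = \tfrac13 \int_\M \si_1 \w \Hs_\ph\ph\) once more in \(\ph\) reduces matters to the pointwise derivative of the companion map \(\ph \mapsto \Hs_\ph\ph\), a \(\Gg_2\)-equivariant linear map \(\ww{3}(\bb{R}^7)^* \to \ww{4}(\bb{R}^7)^*\). Since the \(\Gg_2\)-isotypic decompositions of source and target agree component by component and are multiplicity-free, Schur's lemma forces this derivative to be \(\Hs_\ph \circ I_\ph\) with \(I_\ph = \la_1\pi_1 + \la_7\pi_7 + \la_{27}\pi_{27}\) for universal scalars \(\la_q\). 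I would then determine these one summand at a time: \(\la_1 = \tfrac43\) by homogeneity (one checks \(\Hs_{t\ph}(t\ph) = t^{4/3}\Hs_\ph\ph\), using that \(g_\ph\) is degree \(\tfrac23\) homogeneous in \(\ph\) and that \(\Hs|_{\ww{3}}\) scales by \(\la^{7-6} = \la\) under \(g \mapsto \la^2 g\)); \(\la_7 = 1\) because \(\ww[7]{3}(\bb{R}^7)^*\) is precisely the tangent space at \(\vph_0\) to the \(\SO(7)\)-orbit, along which \(g_\ph\) --- and hence \(\Hs_\ph\) --- is identically \(g_0\), \(\Hs_0\), so that \(\ph \mapsto \Hs_\ph\ph\) restricts there to the linear map \(\ph \mapsto \Hs_0\ph\) whose derivative is \(\si \mapsto \Hs_0\si\); and \(\la_{27} = -1\) by an explicit local computation of the first variation of \(\Hs_\ph\ph\) under a trace-free symmetric deformation of \(g_\ph\) (equivalently, via Bryant's formula for \(\mc{D}g|_\ph\)). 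This gives \(I_\ph = \tfrac43\pi_1 + \pi_7 - \pi_{27}\). The computation for \(\CCH^\del\) is the same, mutatis mutandis, the only change being \(\la_1 = \tfrac34\) (now \(g_\ps\) is degree \(\tfrac12\) homogeneous in \(\ps\) and \(\Hs|_{\ww{4}}\) scales by \(\la^{7-8} = \la^{-1}\), so \(\Hs_{t\ps}(t\ps) = t^{3/4}\Hs_\ps\ps\)), whence \(J_\ps = \tfrac34\pi_1 + \pi_7 - \pi_{27}\).

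The main obstacle will be \(\la_{27}\) (and its 4-form analogue): unlike \(\la_1\) and \(\la_7\), it is fixed neither by homogeneity nor by invariance under isometries, and seems to require either a direct coframe computation of the first variation of \(\Hs_\ph\ph\) in a \(\ww[27]{3}\)-direction or an appeal to Bryant's expression for the metric map \(\ph \mapsto g_\ph\). Finally, since \g\ and \sg\ have identical (complexified) representation theory, every decomposition, Schur argument and homogeneity calculation above transcribes verbatim to \(\svph_0\) and \(\svps_0\); the one extra point is that replacing \(\Hs_0\) by \(\tld{\Hs}_0\) introduces no sign, since \(\tld{\Hs}_0\) squares to \(+1\) on both \(\ww{3}(\bb{R}^7)^*\) and \(\ww{4}(\bb{R}^7)^*\) --- as \(p(7-p)\) is even for \(p = 3,4\) and \(\det\tld{g}_0 > 0\) --- just as does \(\Hs_0\). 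Hence \(\SCH^\del\) and \(\SCCH^\del\) obey the same formulae, and --- every step having been carried out \(\Ga\)-equivariantly over charts --- so do the corresponding orbifold functionals; taking \(\del\M = \es\) then recovers the closed-manifold formulae of Hitchin \cite{TGo3Fi6&7D, SF&SM}.
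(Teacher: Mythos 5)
Your proposal is not the paper's argument: the paper does not reprove these formulae at all, but cites Hitchin's computation \cite[Thm.\ 19 \& Lem.\ 20]{TGo3Fi6&7D} (with the erroneous factor \(\frac{7}{18}\) corrected to \(\frac13\)) together with \cite[Thm.\ 1]{SF&SM} and \cite[Prop.\ 3.3 \& 3.4]{STBoaMLCoG2S} for the \(4\)-form case, and then remarks that these arguments are local and purely representation-theoretic, hence transfer verbatim to compact orbifolds with boundary and to \sg. Your self-contained route is sound in outline and mostly correct in detail: the reduction to the pointwise maps \(\ph\mt vol_\ph\) and \(\ph\mt\Hs_\ph\ph\); the invariance-plus-Euler argument pinning the factors \(\frac13\) and \(\frac14\); the Schur reduction of \(\mc{D}\lt(\Hs_\ph\ph\rt)\) to three scalars \(\la_1,\la_7,\la_{27}\); \(\la_1=\frac43\) (resp.\ \(\frac34\)) by homogeneity; and \(\la_7=1\) because \(\ww[7]{3}\lt(\bb{R}^7\rt)^*\) is the tangent space to the \(\SO(7)\)- (resp.\ \(\SO(3,4)\)-) orbit, along which \(\Hs\) is constant. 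These checks are all correct, and the transfer to \sg\ and to orbifolds with boundary is unproblematic for the reasons you give.

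The genuine gap is exactly where you flag it: \(\la_{27}=-1\), in both the \(3\)- and \(4\)-form cases, is asserted but never derived --- you only indicate that it ``seems to require'' a coframe computation or an appeal to Bryant's formula for \(\ph \mt g_\ph\). Since this is the one coefficient not fixed by symmetry or homogeneity, and since the minus sign on \(\pi_{27}\) is precisely what produces the negative directions used in \lref{P0}, \lref{sg-analogue} and \lref{CH-analogue} (and hence everything downstream in the paper), the proposal is incomplete as it stands. The missing step is, however, cheap within your own framework: by \(\GL_+(7;\bb{R})\)-equivariance, \(\Hs_{A^*\ph}A^*\ph=A^*\lt(\Hs_\ph\ph\rt)\), so differentiating along \(A_t=\exp(tS)\) with \(S=\diag(a_1,\dots,a_7)\), \(\sum_i a_i=0\), gives \(\mc{D}\lt(\Hs_\ph\ph\rt)|_{\vph_0}\lt(\si_S\rt)=S\cdot\vps_0\), where \(\si_S=S\cdot\vph_0\) lies in \(\ww[27]{3}\lt(\bb{R}^7\rt)^*\) by equivariance and multiplicity-freeness. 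Termwise, each monomial \(\th^{ijk}\) of \(\vph_0\) acquires the coefficient \(a_i+a_j+a_k\), while the complementary monomial of \(\vps_0\) acquires the complementary index sum, which equals \(-(a_i+a_j+a_k)\) because \(\sum_i a_i=0\); since \(\vps_0=\Hs_0\vph_0\) termwise, this yields \(S\cdot\vps_0=-\Hs_0\lt(S\cdot\vph_0\rt)\), i.e.\ \(\la_{27}=-1\). The identical count, run with \(\tld{\Hs}_0\) and with \(S\) symmetric with respect to \(\tld{g}_0\), settles the \(4\)-form and split cases and closes the gap.
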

In particular, Stokes' Theorem shows that, as stated in the introduction, \(\ph'\) is a critical point of \(\CH^\del\) \iff\ it satisfies \(\dd\Hs_{\ph'}\ph' = 0\), i.e.\ it is torsion-free.  Likewise, the critical points of the functionals \(\CCH^\del\), \(\SCH^\del\) and \(\SCCH^\del\) are torsion-free, corresponding -- in the manifold case -- to metrics with holonomy contained in \g\ or \sg, as appropriate.

\section{Local volume-altering perturbations of closed \g\ 4-forms}\label{4Loc}

View \(\bb{R}^7\) as a manifold with canonical coordinates \((x^1,...,x^7)\) and let \(\B_\eta\) denote the open ball of Euclidean radius \(\eta>0\) about \(0\).  Initially consider the (torsion-free) \g\ 4-form on \(\bb{R}^7\) defined by:
\ew
\ps_0 = \dd x^{4567} + \dd x^{2367} + \dd x^{2345} + \dd x^{1357} - \dd x^{1346} - \dd x^{1256} - \dd x^{1247},
\eew
inducing the Euclidean metric \(g_0\) and volume form \(vol_0 = \dd x^{1...7}\).  Restricting \(\ps_0\) to \(\ol{\B}_\eta\), write \(\CCH[,\ol{\B_\eta}]^\del\) for the corresponding Donaldson--Hitchin functional on \([\ps_0]_+^\del\).  For brevity of notation, I use the symbol \(\gl\) by analogy with \(\pm\) to mean `greater than or less than, respectively' where, by convention, the upper-most symbol should be read first (thus, the equation \(a \pm b \ \gl \ c\) should be interpreted as the pair of equations \(a + b > c\) and \(a - b < c\)); define \(\gle\) analogously.  The key result upon which this paper's investigation of \(\CCH^\del\) is founded is the following:
\begin{Lem}\label{P0}~

\begin{enumerate}
\item There exist \(\al^\pm \in \Om^3\lt(\ol{\B_1}\rt)\) with \(\supp(\al^\pm) \C \B_1\) such that
\ew
\lt.\mc{D}^2\CCH[,\ol{\B}_1]^\del\rt|_{\ps_0}(\dd\al^\pm,\dd\al^\pm) \gl 0.
\eew

\item
There exist \(\be^\pm\in\Om^3\lt(\ol{\B}_1\rt)\) with \(\supp(\be^\pm)\C\B_1\) such that:
\e\label{c1}
\ps_0 + \dd\be^\pm \text{ is a } G_2 \text{ 4-form}
\ee
and
\ew
\CCH[,\ol{\B}_1]^\del\lt(\ps_0 + \dd\be^\pm\rt) \gl \CCH[,\ol{\B}_1]^\del(\ps_0).
\eew
\end{enumerate}
\end{Lem}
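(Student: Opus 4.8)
The plan is to work entirely within a single coordinate ball and exploit the freedom in choosing compactly-supported exact perturbations, using the formula for $\mc{D}^2\CCH[,\ol{\B}_1]^\del$ from \pref{Hitchin-derivs}: namely $\mc{D}^2\CCH[,\ol{\B}_1]^\del|_{\ps_0}(\dd\vpi_1,\dd\vpi_2) = \tfrac14\int_{\ol{\B}_1}\dd\vpi_1 \w \Hs_0 J_0(\dd\vpi_2)$, where $J_0 = \tfrac34\pi_1 + \pi_7 - \pi_{27}$ with projections taken \wrt\ $\ps_0$ (equivalently $g_0$). The key observation is that $J_0$ acts with \emph{opposite signs} on the three type-components: it is positive on $\ww[1]{3}\ds\ww[7]{3}$ and negative on $\ww[27]{3}$. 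So for part (1), I would first seek a compactly-supported $\al^+$ whose differential $\dd\al^+$ lies (pointwise, or at least predominantly) in $\ww[1]{3}\ds\ww[7]{3}$, forcing the quadratic form to be positive; and a $\al^-$ whose differential lies (predominantly) in $\ww[27]{3}$, forcing it negative.

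The concrete construction I would carry out: pick a bump function $f \in C_c^\infty(\B_1)$ with $f \equiv 1$ near $0$, and consider explicit ansätze for $\al^\pm$ of the form $f$ (or a coordinate-dependent modification of $f$) times a fixed constant-coefficient $3$-form on $\bb{R}^7$ chosen so that the \emph{leading part} of $\dd\al^\pm$ near a generic interior point sits in the desired $\Gg_2$-submodule. For the negative direction, a natural candidate is to take $\al^-$ built from the $\ww[27]{3}$-type model: e.g. deform $\ps_0$ along a path of $\Gg_2$ $4$-forms corresponding to diagonal rescalings of the coordinates (which move $g_0$ in the $\ww[27]{3}$-direction, i.e. trace-free symmetric), cut off by $f$; the cross term in $\dd\al^-\w\Hs_0 J_0(\dd\al^-)$ will then be dominated by a manifestly negative $\pi_{27}$-contribution once the cutoff region is chosen appropriately. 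For the positive direction, use instead a perturbation of pure $\ww[7]{3}$-type, $\al^+ = $ something like $\dd(f) \hk \ps_0$-flavoured, or more simply an exact perturbation whose $\pi_1,\pi_7$ parts survive after integration by parts while the $\pi_{27}$-part can be made arbitrarily small by a suitable choice of $f$. In both cases the sign of the integral is read off once the type-decomposition of $\dd\al^\pm$ is understood, which is a finite linear-algebra computation in $\ww{\pt}(\bb{R}^7)^*$.

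Part (2) then follows from part (1) essentially by the second-order Taylor expansion of $\CCH[,\ol{\B}_1]^\del$ along the ray $t \mapsto \ps_0 + t\,\dd\al^\pm$. Since $\ps_0$ is torsion-free it is a critical point, so $\mc{D}\CCH[,\ol{\B}_1]^\del|_{\ps_0} = 0$ and
\[
\CCH[,\ol{\B}_1]^\del(\ps_0 + t\,\dd\al^\pm) = \CCH[,\ol{\B}_1]^\del(\ps_0) + \tfrac{t^2}{2}\,\mc{D}^2\CCH[,\ol{\B}_1]^\del|_{\ps_0}(\dd\al^\pm,\dd\al^\pm) + O(t^3);
\]
stability of $\Gg_2$ $4$-forms guarantees that $\ps_0 + t\,\dd\al^\pm$ is still a $\Gg_2$ $4$-form for $|t|$ small (this gives \eref{c1}), and for $t$ small enough the quadratic term dominates the $O(t^3)$ remainder, so $\be^\pm := t\,\dd\al^\pm$ with the appropriate small $t$ (of either sign, matched to the sign of $\mc{D}^2$) does the job.

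The main obstacle I anticipate is part (1): one must actually exhibit compactly-supported $\al^\pm$ for which the \emph{global} integral $\int_{\ol{\B}_1}\dd\al^\pm\w\Hs_0 J_0(\dd\al^\pm)$ has a definite sign, not merely a favourable pointwise integrand — the difficulty being that $\dd\al^\pm$ is necessarily exact and compactly supported, so it cannot lie in a single type-component everywhere (the type-components are not $\dd$-closed subspaces), and the "wrong-sign" pieces of $\dd\al^\pm$ must be controlled. The resolution is a scaling/cutoff argument: choose the cutoff $f$ concentrated so that the dominant contribution to the integral comes from a region where $\dd\al^\pm$ is, to leading order, of the desired type, with the remaining terms subleading; making this quantitative — i.e. choosing the family of cutoffs so that the good term genuinely dominates — is the technical heart of the lemma, and is presumably where the explicit local calculations advertised in the abstract are deployed.
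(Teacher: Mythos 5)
Your part (2) coincides with the paper's argument (Taylor expansion at the critical point \(\ps_0\) plus stability of \g\ 4-forms), and your reading of the sign structure of \(\lt.\mc{D}^2\CCH[,\ol{\B}_1]^\del\rt|_{\ps_0}\) via \pref{Hitchin-derivs} is correct. The gap is in part (1), which you rightly call the technical heart but do not actually carry out, and the mechanism you propose there would fail. You want to cut off a perturbation whose ``bulk'' is of the desired pure type and then argue that the wrong-type pieces produced by the cutoff are subleading. There is no parameter that makes them subleading: if \(c\) is a constant-coefficient 4-form of pure type \(27\) with primitive \(\si\) (so \(|\si|\) grows linearly in \(r\)) and \(\al^-=f\si\), then \(\dd\al^- = fc + \dd f\w\si\), and both \(\int f^2|c|^2\,vol_0\) and \(\int|\dd f\w\si|^2\,vol_0\) scale identically under shrinking the support, a thin transition annulus makes the second term blow up like the inverse width, and even the optimal (Hardy-type) radial profile leaves the two terms of the same order. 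Hence the sign of the full quadratic form is not decided by any cutoff-domination argument (the same objection applies to your suggestion that the \(\pi_{27}\)-part of \(\dd\al^+\) ``can be made arbitrarily small by a suitable choice of \(f\)''), and this is precisely the step your proposal leaves open.

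The paper closes this step not by domination but by choosing ans\"atze whose differentials have exactly computable type decompositions: \(\al^+ = f(r)\,\Hs_0\ps_0\), so that \(\dd\al^+ = \dd f\w\Hs_0\ps_0\) lies pointwise in \(\ww[7]{4}\lt(\bb{R}^7\rt)^*\) and positivity is immediate; and \(\al^- = f(r)\,\dd x^{123}\), for which a direct computation gives \(\|\pi_1(\dd\al^-)\|^2=0\), \(\|\pi_7(\dd\al^-)\|^2=\tfrac14\rho^2\) and \(\|\pi_{27}(\dd\al^-)\|^2=\tfrac34\rho^2\) with \(\rho^2=\lt(\tfrac{\dd f}{\dd r}\tfrac1r\rt)^2\lt(\lt(x^4\rt)^2+\dots+\lt(x^7\rt)^2\rt)\), so the integrand equals \(-\tfrac18\rho^2<0\) everywhere. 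Your positive-direction remark gestures towards the first of these, but to have a proof you need explicit choices of this kind (or an equivalent exact evaluation of the integral), not an appeal to the cutoff region being negligible.
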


\begin{proof}
Using \pref{Hitchin-derivs}, the Hessian of \(\CCH[,\ol{\B}_1]^\del\) is given by:
\e\label{D2CCH-eq}
\lt.\mc{D}^2\CCH[,\ol{\B}_1]^\del\rt|_{\ps_0}(\ga_1,\ga_2) = \bigintsss_{\ol{\B}_1} \frac{1}{4}\lt(\frac{3}{4}g_0\lt(\pi_1\ga_1,\pi_1\ga_2\rt) + g_0\lt(\pi_7\ga_1,\pi_7\ga_2\rt) - g_0\lt(\pi_{27}\ga_1,\pi_{27}\ga_2\rt)\rt) vol_0,
\ee
where \(\pi_\pt\) denotes the type decomposition \wrt\ \(\ps_0\).  For (1), consider the choices:
\ew
\al^+ = f(r)\cdot \Hs_0\ps_0 \et \al^- = f(r)\cdot \dd x^{123},
\eew
where \(r = \sqrt{(x^1)^2 + ... + (x^7)^2}\) and \(f:[0,1]\to[0,1]\) is a smooth function such that \(f\equiv 1\) on a neighbourhood of \(0\) and \(f \equiv 0\) on a neighbourhood of \(1\). Then, since \(\dd\Hs_0\ps_0 = 0\), one finds \(\dd\al^+ = \dd f \w \Hs_0\ps_0 \in \Om^4_7\lt(\ol{\B}_1\rt)\) (see \eref{G2TDEx}).  Hence using \eref{D2CCH-eq}:
\ew
\lt.\mc{D}^2\CCH[,\ol{\B}_1]^\del\rt.|_{\ps_0}(\dd\al^+,\dd\al^+) = \bigintsss_{\ol{\B}_1} \frac{1}{4}\lt\|\dd\al^+\rt\|^2_{g_0} vol_0 > 0,
\eew
as required.  For \(\al^-\), a direct calculation yields:
\caw
\|\pi_1(\dd\al^-)\|^2_{g_0} = 0, \hs{5mm} \|\pi_7(\dd\al^-)\|^2_{g_0} = \frac{1}{4}\lt(\frac{\dd f}{\dd r}\frac{1}{r}\rt)^2\lt(\lt(x^4\rt)^2 + \lt(x^5\rt)^2 + \lt(x^6\rt)^2 + \lt(x^7\rt)^2\rt),\\
\|\pi_{27}(\dd\al^-)\|^2_{g_0} = \frac{3}{4}\lt(\frac{\dd f}{\dd r}\frac{1}{r}\rt)^2\lt(\lt(x^4\rt)^2 + \lt(x^5\rt)^2 + \lt(x^6\rt)^2 + \lt(x^7\rt)^2\rt).
\caaw
Thus, for this choice of \(\al^-\), one computes using \eref{D2CCH-eq} that:
\ew
\lt.\mc{D}^2\CCH[,\ol{\B}_1]^\del\rt|_{\ps_0}(\dd\al^-,\dd\al^-) = -\bigintsss_{\ol{\B}_1}\frac{1}{8}\lt(\frac{\dd f}{\dd r}\frac{1}{r}\rt)^2\lt(\lt(x^4\rt)^2 + \lt(x^5\rt)^2 + \lt(x^6\rt)^2 + \lt(x^7\rt)^2\rt) vol_0 < 0.
\eew

For (2), take \(\be^\pm = t\al^\pm\) for some \(t > 0\) to be determined later.  Then \(\ps^\pm(t)\) satisfies \eref{c1} for all \(t\) sufficiently small, since \g\ 4-forms are stable.  Using \pref{Hitchin-derivs}, together with the fact that \(\ps_0\) (being torsion-free) is a critical point of the functional \(\CCH[,\ol{\B}_1]^\del\), one may Taylor expand:
\ew
\CCH[,\ol{\B}_1]^\del\lt(\ps_0 + \dd\be^\pm\rt) = \CCH[,\ol{\B}_1]^\del(\ps_0) + \frac{t^2}{2}\lt.\mc{D}^2\CCH[,\ol{\B}_1]^\del\rt|_{\ps_0}(\dd\al^\pm,\dd\al^\pm) + \mc{O}(t^3).
\eew
The result now follows by taking \(t > 0\) sufficiently small.

\end{proof}

Next, let \(\ps\) be a closed \g\ 4-form on \(\bb{R}^7\) and let \(\B_\eta(\ps)\) denote the open geodesic ball of radius \(\eta\) centred at 0, defined by the metric \(g_\ps\).
\begin{Lem}\label{P}
There exist \(\eta_0 > 0\) (depending on \(\ps\)) and \(\ep > 0\) (independent of \(\ps\)) such that for all \(\eta \in (0,\eta_0]\):

\begin{enumerate}
\item There exist \(\al^\pm_\eta \in \Om^3\lt(\ol{\B}_\eta(\ps)\rt)\) with \(\supp(\al^\pm_\eta) \C \B_\eta(\ps)\) such that
\ew
\lt.\mc{D}^2\CCH[,\ol{\B}_\eta(\ps)]^\del\rt|_\ps(\dd\al^\pm_\eta,\dd\al^\pm_\eta) \gl 0.
\eew

\item
There exist \(\be^\pm_\eta\in\Om^3\lt(\ol{\B}_\eta(\ps)\rt)\) with \(\supp(\be^\pm_\eta)\C\B_\eta(\ps)\) such that \(\ps + \dd\be^\pm_\eta\) is a \g\ 4-form and:
\ew
\CCH[,\ol{\B}_\eta(\ps)]^\del\lt(\ps + \dd\be^\pm_\eta\rt) \gle (1\pm\ep)\CCH[,\ol{\B}_\eta(\ps)]^\del(\ps).
\eew
\end{enumerate}
\end{Lem}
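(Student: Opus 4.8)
The plan is to deduce \lref{P} from \lref{P0} by a rescaling argument followed by a perturbation (Taylor-expansion) estimate, exploiting the fact that, in geodesic normal coordinates at $0$ for the metric $g_\ps$, the \g\ 4-form $\ps$ agrees with the flat model $\ps_0$ to first order. First I would fix geodesic normal coordinates $(x^1,\dots,x^7)$ for $g_\ps$ centred at $0$, so that $g_\ps = g_0 + O(r^2)$ and hence $\ps = \ps_0 + O(r^2)$ (using stability of \g\ 4-forms to make sense of this). For a small parameter $\la>0$ let $s_\la:\bb{R}^7\to\bb{R}^7$ be the dilation $x\mapsto\la x$; pulling back by $s_\la$ sends $\ol{\B}_1$ to (roughly) $\ol{\B}_\la(\ps)$ and $\ps$ to $\la^4(\ps_0 + O(\la^2))$. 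The point is that $s_\la^*\ps$, suitably rescaled, is a $C^2$-small perturbation of $\ps_0$ on $\ol{\B}_1$, with the perturbation controlled by $O(\la^2)$.

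For part (1): take $\al^\pm$ from \lref{P0}(1) on $\ol{\B}_1$, transport them to $\ol{\B}_\la(\ps)$ via $s_{1/\la}$ (with the appropriate power of $\la$), and call the results $\al^\pm_\la$; the support condition $\supp(\al^\pm_\la)\C\B_\la(\ps)$ is preserved. Since $\lt.\mc{D}^2\CCH[,\ol{\B}_1]^\del\rt|_{\ps_0}(\dd\al^\pm,\dd\al^\pm)\gl 0$ strictly, and since the Hessian depends continuously (indeed, via the explicit formula \eref{D2CCH-eq}, polynomially) on the underlying \g\ 4-form and the metric it induces, the $O(\la^2)$ discrepancy between $s_\la^*\ps$ and $\ps_0$ changes $\lt.\mc{D}^2\CCH[,\ol{\B}_\la(\ps)]^\del\rt|_\ps(\dd\al^\pm_\la,\dd\al^\pm_\la)$ by only a relatively small amount for $\la$ small, so the sign is retained. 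Setting $\eta=\la$ and choosing $\eta_0$ small enough gives the claim for all $\eta\in(0,\eta_0]$.

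For part (2): again set $\be^\pm_\la = t\,\al^\pm_\la$ for small $t>0$ and Taylor expand $\CCH[,\ol{\B}_\la(\ps)]^\del(\ps+\dd\be^\pm_\la)$ about $\ps$. Here one must be careful: $\ps$ need not be torsion-free (it is only closed), so the first-order term $t\,\lt.\mc{D}\CCH[,\ol{\B}_\la(\ps)]^\del\rt|_\ps(\dd\al^\pm_\la)$ need not vanish. However, by \pref{Hitchin-derivs} this first-order term equals $\tfrac{t}{4}\int \dd\al^\pm_\la\w\Hs_\ps\ps = -\tfrac{t}{4}\int \al^\pm_\la\w\dd\Hs_\ps\ps$ (Stokes, using $\supp\al^\pm_\la\C\B_\la$), and the torsion $\dd\Hs_\ps\ps$ is bounded on a neighbourhood of $0$, while the rescaling makes $\int|\al^\pm_\la|$ of order $\la^{k}$ for a suitable $k>0$; thus the first-order term is $O(t\la^k)$, genuinely subleading compared with the second-order term, which after rescaling is $t^2\la^{7}\cdot c^\pm$ with $c^\pm\gl 0$ coming from \lref{P0}. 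Comparing with $\CCH[,\ol{\B}_\la(\ps)]^\del(\ps)$, which is $\asymp\la^7\cdot vol_{g_\ps}(\B_\la)/\la^7\asymp\la^7$, one obtains the desired multiplicative bound $\gle(1\pm\ep)\CCH[,\ol{\B}_\la(\ps)]^\del(\ps)$, with $\ep>0$ fixed (determined by $c^\pm$ and $\CCH[,\ol{\B}_1]^\del(\ps_0)$) and independent of $\ps$, provided $t$ is chosen appropriately and $\la=\eta\le\eta_0$ is small.

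The main obstacle I anticipate is bookkeeping the various powers of $\la$ and $t$ so that the relation $\ep$-independent-of-$\ps$ genuinely holds: one needs the leading second-order contribution and the reference value $\CCH[,\ol{\B}_\la(\ps)]^\del(\ps)$ to scale with the \emph{same} power of $\la$ (so that their ratio is $O(1)$, uniformly in $\ps$), while both the error from $\ps\ne\ps_0$ at second order and the first-order torsion term must scale with strictly \emph{larger} powers. This requires choosing $t$ as a fixed small constant (not depending on $\la$ or $\ps$) and then letting $\eta_0$ absorb the remaining errors; verifying uniformity in $\ps$ of the implied constants — which is where the hypothesis "$\ep$ independent of $\ps$" bites — is the delicate point, and rests on the fact that the only $\ps$-dependence in the leading terms enters through the value $\CCH[,\ol{\B}_1]^\del(\ps_0)$ of the \emph{flat} model, which is a fixed positive number.
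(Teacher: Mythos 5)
Your proposal is correct in outline and rests on the same core idea as the paper's proof --- rescale the geodesic ball to unit size and reduce to the flat-model computation of \lref{P0} --- but the execution differs in two respects worth comparing. First, you work in geodesic normal coordinates, so that the geodesic ball is exactly a coordinate ball; the paper instead only normalises \(\ps|_0 = \ps_0|_0\) by a linear map and then sandwiches the rescaled geodesic ball \(\eta^{-1}\ol{\B}_\eta(\ps)\) between coordinate balls \(\ol{\B}_{1\mp r_0}\) (Hausdorff convergence), having built a margin \(1\pm 2\ep\) into the flat-model estimate on the shrunken/enlarged balls, which absorbs both the discrepancy of balls and the error \(\ps_\eta \ne \ps_0\). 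Second, for part (2) you Taylor expand \(\CCH[,\ol{\B}_\eta(\ps)]^\del\) at \(\ps\) itself, which forces you to estimate the non-vanishing first-order (torsion) term; your bookkeeping does close --- the torsion term is \(O(t\eta^8)\) against leading terms of order \(\eta^7\), and \(\ep\) depends only on flat-model constants while every \(\ps\)-dependent error is absorbed into \(\eta_0\) --- but the paper avoids this issue entirely: it transports the fixed perturbations \(\be^\pm\) of \lref{P0}(2) and uses only that \(\ps_\eta = \eta^{-4}\mu_\eta^*\ps \to \ps_0\) locally uniformly, so the full (nonlinear) functional values converge and the flat-model inequality with margin \(2\ep\) passes to \(\ps_\eta\) with margin \(\ep\); no expansion about a non-critical point is needed. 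Two small caveats on your write-up: the claim \(\ps = \ps_0 + O(r^2)\) does not follow from \(g_\ps = g_0 + O(r^2)\) (the 4-form need not be parallel when \(\ps\) has torsion, so in general one only gets \(O(r)\) --- which is all your argument actually uses); and you should record explicitly that \(\ps + \dd\be^\pm_\eta\) is a \g\ 4-form for \(t\) fixed small and \(\eta \le \eta_0\), by stability, since this is part of the statement being proved.
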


\begin{proof}
Let \(\al^\pm\) and \(\be^\pm\) be as in \lref{P0} and extend them via 0 to forms on \(\bb{R}^7\).  Choose \(r_0 > 0\), \(\ep > 0\) sufficiently small such that \(\supp\lt(\al^\pm\rt)\) and \(\supp\lt(\be^\pm\rt) \C \B_{1-r_0}\), and:
\ew
\CCH[,\ol{\B}_{1 \mp r_0}]^\del\lt(\ps_0 + \be^\pm\rt) \gle (1 \pm 2\ep) \CCH[,\ol{\B}_{1 \pm r_0}]^\del(\ps_0).
\eew
Observe that the statements in (1) and (2) are diffeomorphism invariant.  Thus, by applying a suitable (linear) diffeomorphism of \(\bb{R}^7\), one may assume \wlg\ that \(\ps|_0 = \ps_0|_0\).  Moreover, note that (1) and (2) are invariant under the rescaling:
\ew
\eta \mt \la^\frac{1}{4}\eta, \hs{3mm} \ps \mt \la\ps, \hs{3mm} \al^\pm_\eta \mt \la\al^\pm_\eta  \et \be^\pm_\eta \mt \la\be^\pm_\eta
\eew
for any \(\la > 0\) (in particular, note that \(\B_{\la^\frac{1}{4}\eta}(\la\ps) = \B_\eta(\ps)\)).  Thus, for each \(\eta>0\) consider the diffeomorphism \(\mu_\eta : x \in \bb{R}^7 \mt \eta x \in \bb{R}^7\) and write \(\ps_\eta = \eta^{-4}\mu_\eta^*\ps\).  Then \(\ps_\eta \to \ps_0\) locally uniformly on \(\bb{R}^7\) as \(\eta \to 0\); in particular, there exists \(\eta_1 = \eta_1(\ps) > 0\) such that for all \(\eta < \eta_1\):
\ew
\lt.\mc{D}^2\CCH[,\ol{\B}_{1-r_0}]^\del\rt|_{\ps_\eta}(\dd\al^\pm,\dd\al^\pm) = \lt.\mc{D}^2\CCH[,\ol{\B}_1]^\del\rt|_{\ps_\eta}(\dd\al^\pm,\dd\al^\pm)\gl 0
\eew
(the first equality following from \(\supp\lt(\al^\pm\rt) \C \B_{1-r_0}\)):
\ew
\ps_\eta + \dd\be^\pm \text{ is a } \Gg_2 \text{ 4-form}
\eew
and:
\e\label{r0-eps}
\CCH[,\ol{\B}_{1 \mp r_0}]^\del\lt(\ps_\eta + \dd\be^\pm\rt) \gle (1 \pm \ep) \CCH[,\ol{\B}_{1 \pm r_0}]^\del(\ps_\eta).
\ee
Next, observe that \(\mu_\eta^{-1}\lt(\ol{\B}_\eta(\ps)\rt) = \eta^{-1}\ol{\B}_\eta(\ps)\) converges in the Hausdorff sense to the closed unit ball \(\ol{\B}_1\) as \(\eta \to 0\); explicitly, for each \(r \in (0,1)\), there exists \(\eta_2 = \eta_2(r,\ps) > 0\) such that for all \(\eta < \eta_2\):
\e\label{Hd-cvgce}
\ol{\B}_{1-r} \cc \eta^{-1}\ol{\B}_\eta(\ps) \cc \ol{\B}_{1+r}.
\ee
Set \(\eta_0 = \min[\eta_1(\ps), \eta_2(r_0,\ps)]\).  Then clearly \(\ps_\mu + \dd\be^\pm\) is a \g\ 4-form for all \(\eta < \eta_0\).   Moreover:
\ew
\lt.\mc{D}^2\CCH[,\eta^{-1}\ol{\B}_\eta(\ps)]^\del\rt|_{\ps_\eta}(\dd\al^\pm,\dd\al^\pm) = \lt.\mc{D}^2\CCH[\ol{\B}_{1-r_0}]^\del\rt|_{\ps_\eta}(\dd\al^\pm,\dd\al^\pm) \gl 0,
\eew
the first equality following from \(\supp\lt(\dd\al^\pm\rt) \C \ol{\B}_{1-r_0}\).  Finally:
\ew
\CCH[,\eta^{-1}\ol{\B}_\eta(\ps)]^\del\lt(\ps_\eta - \dd\be^-\rt) &\le \CCH[,\ol{\B}_{1+r_0}]^\del\lt(\ps_\eta - \dd\be^-\rt) \hs{3mm} \text{(by \eref{Hd-cvgce})}\\
&\le (1-\ep)\CCH[,\ol{\B}_{1 - r_0}]^\del(\ps_\eta) \hs{3mm} \text{(by \eref{r0-eps})}\\
&\le (1-\ep)\CCH[,\eta^{-1}\ol{\B}_\eta(\ps)]^\del(\ps_\eta) \hs{3mm} \text{(by \eref{Hd-cvgce})}
\eew
and likewise:
\ew
\CCH[,\eta^{-1}\ol{\B}_\eta(\ps)]^\del\lt(\ps_\eta + \dd\be^+\rt) &\ge \CCH[,\ol{\B}_{1-r_0}]^\del\lt(\ps_\eta + \dd\be^+\rt) \hs{3mm} \text{(by \eref{Hd-cvgce})}\\
&\ge (1+\ep)\CCH[,\ol{\B}_{1 + r_0}]^\del(\ps_\eta) \hs{3mm} \text{(by \eref{r0-eps})}\\
&\le (1+\ep)\CCH[,\eta^{-1}\ol{\B}_\eta(\ps)]^\del(\ps_\eta) \hs{3mm} \text{(by \eref{Hd-cvgce})}.
\eew
Thus, by diffeomorphism and scale invariance, the proof is completed by setting \(\al^\pm_\eta = \eta^4\mu_{\eta^{-1}}^*\al^\pm\) and \(\be^\pm_\eta = \eta^4\mu_{\eta^{-1}}^*\be^\pm\).

\end{proof}

\section{The unboundedness above and below of \(\CCH^\del\)}\label{GMT}

The aim of this section is to prove \tref{UB-Thm} in the \g\ case.  Recall the statement of the theorem:\vs{3mm}

\noindent{\bf Theorem \ref{UB-Thm}} (\g\ case){\bf .}
\em The functional \(\CCH^\del\) is always unbounded above and below.  Specifically, let \(\M\) be a compact, oriented 7-orbifold with boundary and let \(\ps\) be a closed \g\ 4-form on \(\M\).  Then:
\ew
\inf_{\ps' \in [\ps]_+^\del} \CCH^\del\lt(\ps'\rt) = 0 \et \sup_{\ps' \in [\ps]^\del_+} \CCH^\del(\ps') = \infty.
\eew
In particular, by taking \(\del\M = \es\), the same conclusion applies to the Hitchin functional \(\CCH\).\vs{2mm}\em

Initially, let \((\M,\ps)\) be a (possibly non-compact) oriented 7-manifold without boundary equipped with a closed \g\ 4-form and for each \(p \in \M\), let \(\B_r(p)\) denote the (open) geodesic ball of radius \(r\) centred at \(p\) defined by the metric \(g_\ps\).  By applying \lref{P} about each point \(p \in \M\), one immediately obtains the following result:
\begin{Lem}\label{local-ud}
Let \(\ep > 0\) be as in \lref{P}.  Then for each \(p\in \M\), there exists \(\eta_p > 0\) (depending on \(\ps\)) such that for all \(\eta\in(0,\eta_p]\), there exist \(\be_\eta^{p/\pm}\in\Om^3\lt(\M\rt)\) with \(\supp(\be_\eta^{p/\pm}) \C \B_\eta(p)\) satisfying:
\ew
\ps + \dd\be_\eta^{p/\pm} \text{ is a } G_2 \text{ 4-form}
\eew
and:
\ew
\CCH[,\ol{\B}_\eta(p)]^\del\lt(\ps + \dd\be_\eta^{p/\pm}\rt) \gle (1\pm\ep) \CCH[,\ol{\B}_\eta(p)]^\del(\ps).
\eew
\end{Lem}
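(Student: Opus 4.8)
The plan is to transport \lref{P} from the flat model on $\bb{R}^7$ to a small geodesic ball around $p$, exploiting — as in the proof of \lref{P} — that both assertions are invariant under orientation-preserving diffeomorphisms and under scaling. Recall that the metric $g_\si$ and volume form $vol_\si$ attached to a \g\ 4-form $\si$ are natural, i.e. $g_{\Phi^*\si} = \Phi^*g_\si$ and $vol_{\Phi^*\si} = \Phi^*vol_\si$ for every orientation-preserving diffeomorphism $\Phi$ (and likewise for orientation-preserving linear isomorphisms of vector spaces).

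First I fix $p \in \M$ and choose a normal chart adapted to $\ps$. Since $\ps|_p$ is a \g\ 4-form on $\T_p\M$, there is an orientation-preserving linear isomorphism $L : \T_p\M \to \bb{R}^7$ with $L^*(\ps_0|_0) = \ps|_p$; by naturality, $L$ is automatically an isometry $(\T_p\M, g_\ps|_p) \to (\bb{R}^7, g_0)$. I pick $R > 0$ small enough that the Riemannian exponential $\exp_p$ of $g_\ps$ restricts to a diffeomorphism from the $g_\ps|_p$-ball of radius $2R$ onto $\B_{2R}(p)$, and put $\Phi = \exp_p \circ L^{-1} : \B_{2R} \to \B_{2R}(p)$, a diffeomorphism of the Euclidean ball onto the geodesic ball. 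Since $\D\exp_p|_0 = \Id$, we have $\D\Phi|_0 = L^{-1}$, so $\tld\ps := \Phi^*\ps$ is a closed \g\ 4-form on $\B_{2R} \pc \bb{R}^7$ with $\tld\ps|_0 = \ps_0|_0$; by naturality $\Phi^*g_\ps = g_{\tld\ps}$, so $\Phi$ is an orientation-preserving isometry of $(\B_{2R}, g_{\tld\ps})$ onto $(\B_{2R}(p), g_\ps)$, carrying geodesics from $0$ to geodesics from $p$.

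Second, I extend $\tld\ps$ to a closed \g\ 4-form $\h\ps$ on all of $\bb{R}^7$ that coincides with $\tld\ps$ on $\B_R$. Shrinking $R$, write $\tld\ps = \ps_0|_0 + \dd\ga$ on $\B_{2R}$ via the Poincaré homotopy operator, with $\|\ga\|_{C^0} = \mc{O}(R\,\|\tld\ps - \ps_0|_0\|_{C^0})$; choose a cutoff $\chi$ equal to $1$ on $\B_R$ with $\supp\chi \C \B_{2R}$ and $\|\dd\chi\|_{C^0} = \mc{O}(R^{-1})$, and set $\h\ps = \ps_0|_0 + \dd(\chi\ga)$, extended by $\ps_0|_0$ outside $\B_{2R}$. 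Then $\|\h\ps - \ps_0|_0\|_{C^0(\bb{R}^7)} = \mc{O}(\|\tld\ps - \ps_0|_0\|_{C^0(\B_{2R})}) \to 0$ as $R \to 0$, so for $R$ small $\h\ps$ is a \g\ 4-form everywhere, by stability. (Alternatively one may bypass this step by observing that the proof of \lref{P} only ever uses $\ps$ on a ball about $0$, i.e. it is really a statement about the germ of $\ps$ at the origin.) Applying \lref{P} to $\h\ps$ gives $\eta_0(\h\ps) > 0$ and, for each $\eta \in (0, \eta_0(\h\ps)]$, forms $\be^\pm_\eta \in \Om^3(\ol{\B}_\eta(\h\ps))$ with $\supp(\be^\pm_\eta) \C \B_\eta(\h\ps)$ such that $\h\ps + \dd\be^\pm_\eta$ is a \g\ 4-form and $\CCH[,\ol{\B}_\eta(\h\ps)]^\del(\h\ps + \dd\be^\pm_\eta) \gle (1 \pm \ep)\,\CCH[,\ol{\B}_\eta(\h\ps)]^\del(\h\ps)$, with $\ep$ the (universal) constant of \lref{P}.

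Finally, I set $\eta_p := \min\{R, \eta_0(\h\ps)\}$, decreased further so that $\ol{\B}_\eta(\h\ps) \pc \B_R$ for all $\eta \le \eta_p$ (possible since the $g_{\h\ps}$-geodesic balls contract to $\{0\}$ while $R$ is fixed). On $\B_R$ one has $\h\ps = \tld\ps = \Phi^*\ps$ and $g_{\h\ps} = \Phi^*g_\ps$, so $\Phi(\B_\eta(\h\ps)) = \B_\eta(p)$ for $\eta \le \eta_p$. I define $\be_\eta^{p/\pm} \in \Om^3(\M)$ to equal $(\Phi^{-1})^*\be^\pm_\eta$ on $\B_\eta(p)$ and $0$ elsewhere (a well-defined smooth form since $\supp(\be^\pm_\eta) \C \B_\eta(\h\ps)$). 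On $\B_\eta(p)$, $\ps + \dd\be_\eta^{p/\pm} = (\Phi^{-1})^*(\h\ps + \dd\be^\pm_\eta)$ is a \g\ 4-form, and off $\ol{\B}_\eta(p)$ it equals $\ps$; hence $\ps + \dd\be_\eta^{p/\pm}$ is a \g\ 4-form on $\M$. Since $\Phi$ is an orientation-preserving diffeomorphism, naturality of $vol$ gives $\CCH[,\ol{\B}_\eta(p)]^\del(\ps + \dd\be_\eta^{p/\pm}) = \CCH[,\ol{\B}_\eta(\h\ps)]^\del(\h\ps + \dd\be^\pm_\eta)$ and $\CCH[,\ol{\B}_\eta(p)]^\del(\ps) = \CCH[,\ol{\B}_\eta(\h\ps)]^\del(\h\ps)$; combining with the estimate from \lref{P} finishes the proof. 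The one step I expect to need genuine care is this global extension of $\tld\ps$ (or, equivalently, the observation that \lref{P} depends only on the germ of $\ps$ at $0$); everything else is the diffeomorphism- and scale-invariance already used in \lref{P0} and \lref{P}.
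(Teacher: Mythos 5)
Your argument is correct and is essentially what the paper does: the paper simply states that \lref{local-ud} is obtained "immediately" by applying \lref{P} about each point \(p\), which implicitly means exactly the chart-transport you carry out (pulling \(\ps\) back by \(\exp_p\circ L^{-1}\), invoking diffeomorphism-invariance of the volume functional and of the geodesic balls, and noting \lref{P} only sees the germ of the form at the origin). Your explicit cut-off extension of \(\tld\ps\) to a closed \g\ 4-form on all of \(\bb{R}^7\) is a careful way to reconcile the literal hypothesis of \lref{P} with the locally defined pullback, and is sound.
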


To convert this local result into the global statement of the unboundedness of \(\CCH^\del\), I establish the following covering lemma:

\begin{Lem}\label{cov-lem}
Let \((\M,\ps)\) be a (possibly non-compact) oriented 7-manifold without boundary equipped with a closed \g\ 4-form, let \(\eta_p\) be as defined in \lref{local-ud} and consider the set:
\ew
\mc{G} = \lt\{ \ol{\B}_\eta(p) ~\m|~ p \in \M,\ \eta \in (0,\eta_p] \rt\}.
\eew
Then there exists a countable, disjoint subset \(\mc{F} \pc \mc{G}\) which measure-theoretically covers \(\M\), i.e.\ which satisfies:
\ew
\mu_\ps\lt( \M \m\osr \bigcup\mc{F}\rt) = 0,
\eew
where \(\mu_\ps\) denotes the Borel measure on \(\M\) induced by the Riemannian metric \(g_\ps\).
\end{Lem}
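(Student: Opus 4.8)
The plan is to invoke a Vitali-type covering argument. The collection $\mc{G}$ is a Vitali cover of $\M$ in the sense that every point $p \in \M$ is contained in closed geodesic balls $\ol{\B}_\eta(p) \in \mc{G}$ of arbitrarily small radius (all $\eta \in (0,\eta_p]$ are allowed). Since $g_\ps$ is a smooth Riemannian metric and $\M$ is a manifold (without boundary) of dimension $7$, the Borel measure $\mu_\ps$ is a Radon measure which, on small geodesic balls, is comparable to Lebesgue measure in normal coordinates; in particular small geodesic balls satisfy a uniform doubling estimate $\mu_\ps(\ol{\B}_{2r}(p)) \le C \mu_\ps(\ol{\B}_r(p))$ for all sufficiently small $r$, with $C$ locally uniform in $p$. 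The Vitali covering theorem for doubling metric measure spaces then applies. I would first reduce to the case where $\M$ is $\sigma$-compact (which it is automatically, being a second-countable manifold), write $\M = \bigcup_k K_k$ as an increasing union of compact sets, and run the argument on each $K_k$.

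First I would fix a relatively compact open set $\Om \Subset \M$ and observe that the radii $\eta_p$, while depending on $p$, may be shrunk without affecting the conclusion of \lref{local-ud}; so on $\ol\Om$ we may work with geodesic balls of radius at most some uniform $\eta_\Om > 0$ on which the doubling estimate holds with a uniform constant. Second, I would apply the basic Vitali $5r$-covering lemma (or its refinement due to Besicovitch, though $5r$ suffices here) to extract a countable disjoint subcollection $\mc{F}_\Om \pc \mc{G}$ of balls contained in a slightly enlarged neighbourhood of $\Om$ such that $\Om \osr \bigcup \mc{F}_\Om \pc \bigcup_{B \in \mc{F}_\Om'} 5B$ for a residual subcollection $\mc{F}_\Om'$ with $\mu_\ps\lt(\bigcup \mc{F}_\Om'\rt)$ arbitrarily small; iterating this (exhausting the leftover open set, which still admits balls from $\mc{G}$ of arbitrarily small radius) produces a countable disjoint $\mc{F}_\Om \pc \mc{G}$ with $\mu_\ps(\Om \osr \bigcup \mc{F}_\Om) = 0$. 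Third, I would patch these together over the exhaustion $\M = \bigcup_k K_k$: having chosen a disjoint measure-covering family for $K_1$, I restrict attention to the open set $K_2^\circ \osr \bigcup(\text{chosen balls})$, which is again covered in the Vitali sense by the balls of $\mc{G}$ lying inside it, and repeat; the union over all stages is the desired countable disjoint $\mc{F}$, and $\M \osr \bigcup \mc{F}$ is a countable union of $\mu_\ps$-null sets, hence null.

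The main obstacle — really the only non-formal point — is verifying that $\mc{G}$ genuinely behaves like a Vitali cover, i.e. that geodesic balls for $g_\ps$ of small radius satisfy the doubling property with locally uniform constant, so that the Vitali covering theorem applies. This follows from the fact that in $g_\ps$-normal coordinates centred at any $p \in \ol\Om$, the metric coefficients are $C^0$-close to the Euclidean ones uniformly over $\ol\Om$ (by compactness of $\ol\Om$ and smoothness of $g_\ps$), so that geodesic balls $\B_r(p)$ are sandwiched between Euclidean balls $\B^{\E}_{(1-\delta)r}$ and $\B^{\E}_{(1+\delta)r}$ of comparable radius for all $r$ below a uniform threshold; the Euclidean volume doubling estimate then transfers. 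One small bookkeeping point to handle carefully: the balls in $\mc{G}$ are \emph{closed} balls, but since $\mu_\ps(\del\B_r(p)) = 0$ for all but countably many $r$ (monotonicity of $r \mapsto \mu_\ps(\ol\B_r(p))$), one may freely pass between open and closed balls, and the constraint $\eta \le \eta_p$ poses no difficulty since only arbitrarily small radii are needed. No compactness of $\M$ is used, consistent with the hypotheses; compactness of $\M$ will only enter later, when this lemma is combined with \lref{local-ud} to extract a \emph{finite} subfamily.
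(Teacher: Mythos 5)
Your overall strategy is the same as the paper's: local uniform doubling for $g_\ps$-balls plus the Vitali covering theorem for doubling metric measure spaces (the paper cites \cite[Thm.\ 1.6]{LoAoMS}), applied on compact pieces of an exhaustion and then patched together. The local doubling verification and the reduction of $\eta_p$ are fine, and your within-$\Omega$ argument is just a re-proof of the cited theorem. The gap is in the patching step across the exhaustion. Having extracted a full (countable, generally infinite) disjoint family $\mc{F}_1$ covering the first piece up to a null set, you assert that $K_2^\circ \setminus \bigcup \mc{F}_1$ is \emph{open} and is again covered "in the Vitali sense" by balls of $\mc{G}$ lying inside it. Neither claim is automatic: an infinite union of closed balls need not be closed, so the leftover set need not be open; and, more substantively, a point of the leftover set lying in $\ol{\bigcup\mc{F}_1}\setminus\bigcup\mc{F}_1$ admits \emph{no} small ball disjoint from the already-chosen family, so for such points the disjointness of the final family would fail. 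This closure residue is not negligible merely because the balls are disjoint -- one can arrange countably many disjoint closed balls (e.g.\ intervals accumulating on a fat Cantor set) whose closure exceeds their union by a set of positive measure. Your argument therefore needs an extra step, e.g.: confine the stage-one balls to an open set $\Omega_1$ with $\mu_\ps(\del\Omega_1)=0$; since $\mu_\ps$-a.e.\ point of $\Omega_1$ lies in $\bigcup\mc{F}_1$, the residue $\ol{\bigcup\mc{F}_1}\setminus\bigcup\mc{F}_1$ is $\mu_\ps$-null, and a.e.\ remaining point is then at positive distance from $\bigcup\mc{F}_1$, so the Vitali theorem applies at the next stage. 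With that insertion your proof closes.

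The paper sidesteps this issue entirely by a slightly different decomposition: it fixes a compact exhaustion $\M_0 \pc \M_1 \pc \dots$ and works on the pairwise disjoint pieces $\M'_n = \itr{\M}_n \setminus \M_{n-1}$, shrinking each $\eta_p$ so that every ball centred in $\M'_n$ is contained in $\M'_n$. The Vitali theorem is then applied to each piece independently, disjointness of the total family is automatic, and the uncovered set is contained in $\bigcup_n \del\M_n$ together with the per-piece null sets, hence is null. You may wish to adopt this arrangement, or else add the residue-is-null argument sketched above.
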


\begin{proof}
Let \(d_\ps\) denote the metric (i.e.\ distance function) on \(\M\) induced by the Riemannian metric \(g_\ps\) and write \(\ol{\mc{B}}_\eta(p)\) for the closed metric ball of radius \(\eta\) centred at \(p\).  By shrinking each \(\eta_p > 0\) if necessary, one may assume that \(\ol{\mc{B}}_\eta(p) = \ol{\B}_\eta(p)\) for all \(\eta \in (0,\eta_p]\) and thus:
\ew
\mc{G} = \lt\{ \ol{\mc{B}}_\eta(p) ~\m|~ p \in \M,\ \eta \in (0,\eta_p] \rt\}.
\eew

Initially, suppose that \(\M\) is compact (i.e.\ closed).  In this case, \(\M\) is a doubling metric measure space, in the sense that there exist constants \(C,R > 0\) such that for all \(p \in \M\) and \(\eta \in (0,R]\):
\e\label{doubling-met}
\mu_\ps\lt(\ol{\mc{B}}_{2\eta}(p)\rt) \le C\mu_\ps\lt(\ol{\mc{B}}_\eta(p)\rt).
\ee
Thus, by the Vitali Covering Theorem \cite[Thm.\ 1.6]{LoAoMS}, there exists a countable, disjoint measure-theoretic cover \(\mc{F} \pc \mc{G}\), as required.  (N.B.\ Whilst the statement of the Vitali Covering Theorem in \cite{LoAoMS} requires that \eref{doubling-met} hold for balls of arbitrarily large radius, the proof only requires the weaker condition stated above.)
 
Now suppose that \(\M\) is non-compact and fix a compact exhaustion \(\M_0 \pc \M_1 \pc \M_2 \pc ...\) of \(\M\) by compact manifolds with boundary.  (Such an exhaustion may be constructed by picking a proper function \(f: \M \to \bb{R}\), choosing increasing, unbounded sequences \(i_n,j_n \in (0,\0)\) such that \(i_n\) and \(-j_n\) are regular values of \(f\) for each \(n\) and setting \(\M_n = f^{-1}([-j_n,i_n])\).)  For each \(n \ge 1\), write \(\itr{\M}_n = \M_n\osr\del\M_n\) and consider the space \(\M'_n = \lt.\itr{\M}_n \m\osr \M_{n-1}\rt.\).  Then \(\lt(\M'_n, d_\ps|_{\M'_n}, \mu_\ps|_{\M'_n}\rt)\) is once again a doubling metric measure space (by compactness of \(\M_n\)) and thus one may choose a countable, disjoint, measure-theoretic cover \(\mc{F}_n\) of \(\M_n'\) from the set:
\ew
\mc{G}_n = \lt\{ \ol{\mc{B}}_\eta(p) ~\m|~ p \in \M'_n,\ \eta \in (0,\eta_p] \rt\} = \lt\{ \ol{\B}_\eta(p) ~\m|~ p \in \M'_n,\ \eta \in (0,\eta_p] \rt\},
\eew
where, by shrinking each \(\eta_p\) if necessary, one may assume that \(\ol{\B}_{\eta_p}(p) \pc \M'_n\) for each \(p \in \M'_n\).  The required cover of \(\M\) is then obtained by taking \(\mc{F} = \bigcup_{n \in \bb{B}} \mc{F}_n\).
 
\end{proof}

I now prove \tref{UB-Thm} in the \g\ case:

\begin{proof}
Choose \(\nu>0\) sufficiently small that:
\e\label{nuchoice}
(1+\ep)(1-\nu) \ge \lt(1+\frac{\ep}{2}\rt) \et 1 - \ep + \nu\ep &\le \lt(1-\frac{\ep}{2}\rt),
\ee
for \(\ep>0\) as defined in \lref{P}.  By applying \lref{cov-lem} to each connected component of the space \(\lt.\M \m\osr (\del\M \cup \M_{sing})\rt.\), one can obtain a countable, disjoint, measure-theoretic cover \(\lt\{\ol{\B_{\eta^{(i)}}(p^{(i)})}\rt\}_{i=0}^\infty\) of \(\M\) such that \(\ol{\B}_{\eta^{(i)}}\lt(p^{(i)}\rt) \pc \lt.\M \m\osr (\del\M \cup \M_{sing})\rt.\) for all \(i\) (note that \(\lt\{\ol{\B_{\eta^{(i)}}(p^{(i)})}\rt\}_{i=0}^\infty\) measure-theoretically covers all of \(\M\), rather than just \(\lt.\M \m\osr (\del\M \cup \M_{sing})\rt.\), as \(\del\M \cup \M_{sing}\) itself has zero measure in \(\M\)).  Then:
\ew
\CCH^\del(\ps) = \sum_{i=0}^\infty \CCH[,\ol{\B}_{\eta^{(i)}}\lt(p^{(i)}\rt)]^\del(\ps).
\eew
In particular, the right-hand sum is convergent and so there is \(N\ge 0\) such that:
\e\label{nubound}
\sum_{i=N+1}^\infty \CCH[,\ol{\B}_{\eta^{(i)}}\lt(p^{(i)}\rt)]^\del(\ps) < \nu\CCH^\del(\ps).
\ee
Now let:
\ew
\ps^\pm_1 = \ps + \sum_{i=0}^N \dd\be_{\eta^{(i)}}^{p^{(i)}/\pm},
\eew
where \(\be_{\eta^{(i)}}^{p^{(i)}/\pm}\) are defined according to \lref{local-ud}.  Then \(\ps_1^\pm \in [\ps]_+^\del\).  Moreover, by the estimates obtained in \lref{local-ud} and \erefs{nuchoice} and \eqref{nubound}:
\ew
\CCH^\del(\ps^-_1) &= \sum_{i=0}^N\CCH[,\ol{\B}_{\eta^{(i)}}\lt(p^{(i)}\rt)]^\del(\ps^-_1) + \sum_{i=N+1}^\infty\CCH[,\ol{\B}_{\eta^{(i)}}\lt(p^{(i)}\rt)]^\del(\ps^-_1)\\
&\le (1-\ep)\sum_{i=0}^N\CCH[,\ol{\B}_{\eta^{(i)}}\lt(p^{(i)}\rt)]^\del(\ps) + \sum_{i=N+1}^\infty\CCH[,\ol{\B}_{\eta^{(i)}}\lt(p^{(i)}\rt)]^\del(\ps)\\
&\le (1-\ep)\CCH^\del(\ps) + \ep\sum_{i=N+1}^\infty\CCH[,\ol{\B}_{\eta^{(i)}}\lt(p^{(i)}\rt)]^\del(\ps)\\
&\le (1-\ep+\nu\ep)\CCH^\del(\ps) \le \lt(1-\frac{\ep}{2}\rt)\CCH^\del(\ps)
\eew
and:
\ew
\CCH^\del(\ps^+_1) \ge \sum_{i=0}^N\CCH[,\ol{\B}_{\eta^{(i)}}\lt(p^{(i)}\rt)]^\del(\ps^+_1) \ge (1+\ep)(1-\nu)\CCH^\del(\ps) \ge \lt(1+\frac{\ep}{2}\rt)\CCH^\del(\ps).
\eew
Now recursively define \(\ps^\pm_n\) by applying the above argument to \(\ps^\pm_{n-1}\), respectively, for \(n\ge 2\). Then since the value of \(\ep\) in \lref{local-ud} is independent of the choice of closed \g\ 4-form, it follows that for all \(n\ge0\):
\ew
\CCH^\del(\ps^-_n) \le \lt(1 - \frac{\ep}{2}\rt)^n \CCH^\del(\ps) \to 0 \text{ as } n \to \infty
\eew
and:
\ew
\CCH^\del(\ps^+_n) \ge \lt(1 + \frac{\ep}{2}\rt)^n \CCH^\del(\ps) \to \infty \text{ as } n \to \infty,
\eew
as required.

\end{proof}

\section{The unboundedness above and below of \(\SCH^\del\) and \(\SCCH^\del\)}

The aim of this section is to prove \tref{UB-Thm} in the \sg\ case.  The key difference between the \sg\ case and the \g\ case is that \g-structures induce Riemannian metrics and thus the closed geodesic ball about a given point \(p\) defines a compact neighbourhood of \(p\), whilst \sg-structures induce pseudo-Riemannian metrics and thus induce closed geodesic balls about \(p\) which are neither compact nor neighbourhoods of \(p\).  To mitigate this difference, I make the following definition, based on \cite[Terminology 4.3]{A3F&ESLGoTG2}:
\begin{Defn}
Let \(\M\) be an oriented 7-manifold with \sg-structure and let \(\lt(\sph,\sps\rt)\) be the corresponding \sg\ 3- and 4-forms.  Given \(p \in \M\), an automorphism \(\mc{C} \in \GL_+(\T_p\M)\) is termed a Cartan involution of \(\T_p\M\) if \(\mc{C}^*\sph|_p = \sph|_p\) (equivalently \(\mc{C}^*\sps|_p = \sps|_p\)) and if the symmetric bilinear form \(g_{\sph}|_p(-,\mc{C}(-)) = g_{\sps}|_p(-,\mc{C}(-))\) is positive definite; denote this bilinear form by either \(h_{\sph,\mc{C}}\) or \(h_{\sps,\mc{C}}\), depending upon context.  A Cartan field on \(\M\) is then a section \(\mc{C}\) of the bundle \(\Aut(\T\M)\) such that \(\mc{C}|_p\) is a Cartan involution of \(\T_p\M\) for each \(p \in \M\).
\end{Defn}

As an example of the above definition, recall \(\svph_0 \in \ww{3}\lt(\bb{R}^7\rt)^*\) and \(\svps_0 \in \ww{4}\lt(\bb{R}^7\rt)^*\).  The automorphism:
\ew
\mc{C}_0 = \diag(+1,+1,+1,-1,-1,-1,-1) \in \tld{\Gg}_2
\eew
is a Cartan involution \wrt\ \(\lt(\svph_0,\svps_0\rt)\) and \(h_{\svph_0,\mc{C}_0} = h_{\svps_0,\mc{C}_0}\) is simply the Euclidean inner product on \(\bb{R}^7\).  More generally, it was proven in \cite[Prop.\ 4.4 \&\ Thm.\ 4.5]{A3F&ESLGoTG2} that \sg\ acts transitively on the space of Cartan involutions on \(\bb{R}^7\) and that the stabiliser of any Cartan involution on \(\bb{R}^7\) is isomorphic to the maximally compact subgroup \(\SO(4)\) of \sg, implying in particular that the space of Cartan involutions on \(\lt(\bb{R}^7,\svph_0,\svps_0\rt)\) is contractible.  Thus, given an oriented 7-manifold with \sg-structure \(\lt(\M,\sph,\sps\rt)\), there always exists a (homotopically unique) choice of Cartan field \(\mc{C}\) and moreover, for each \(p \in \M\), there exists an orientation-preserving isomorphism \(\al:\T_p\M \to \bb{R}^7\) such that:
\ew
\al^*\svph_0 = \sph|_p, \hs{3mm} \al^*\svps_0 = \sps|_p \et \al^{-1} \circ \mc{C}_0 \circ \al = \mc{C}|_p.
\eew

Given this additional terminology, one may now proceed as in the \g\ case.
\begin{Lem}\label{sg-analogue}
Let \(\sph\) be a closed \sg\ 3-form on \(\bb{R}^7\), let \(\mc{C}\) be a Cartan field and let \(\B_\eta\lt(\sph,\mc{C}\rt)\) denote the open geodesic ball of radius \(\eta\) centred at 0, defined by the metric \(h_{\sph,\mc{C}}\). Then there exist \(\eta_0 > 0\) (depending on \(\sph\) and \(\mc{C}\)) and \(\ep > 0\) (independent of \(\sph\) and \(\mc{C}\)) such that for all \(\eta \in (0,\eta_0]\):
\begin{enumerate}
\item There exist \(\tld{\al}^\pm_\eta \in \Om^2\lt(\ol{\B}_\eta\lt(\sph,\mc{C}\rt)\rt)\) with \(\supp(\tld{\al}^\pm_\eta) \C \B_\eta\lt(\sph,\mc{C}\rt)\) such that
\ew
\lt.\mc{D}^2\SCH[,\ol{\B}_\eta\lt(\sph,\mc{C}\rt)]^\del\rt|_{\sph}\lt(\dd\tld{\al}^\pm_\eta,\dd\tld{\al}^\pm_\eta\rt) \gl 0.
\eew

\item
There exist \(\tld{\be}^\pm_\eta\in\Om^2\lt(\ol{\B}_\eta\lt(\sph,\mc{C}\rt)\rt)\) with \(\supp\lt(\tld{\be}^\pm_\eta\rt)\C\B_\eta\lt(\sph,\mc{C}\rt)\) such that \(\sph + \dd\tld{\be}^\pm_\eta\) is a \sg\ 3-form and:
\ew
\SCH[,\ol{\B}_\eta\lt(\sph,\mc{C}\rt)]^\del\lt(\sph + \dd\tld{\be}^\pm_\eta\rt) \gle (1\pm\ep)\SCH[,\ol{\B}_\eta\lt(\sph,\mc{C}\rt)]^\del\lt(\sph\rt).
\eew
\end{enumerate}

Likewise, let \(\sps\) be a closed \sg\ 4-form on \(\bb{R}^7\) with Cartan field \(\mc{C}\).  Then the analogous conclusions apply.
\end{Lem}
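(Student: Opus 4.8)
The plan is to follow the two-step pattern of the \g\ case (Lemmas \ref{P0} and \ref{P}) almost verbatim; the one genuinely new ingredient is the interplay between the indefinite Hodge star $\tld{\Hs}_0$ of $g_{\svph_0}$ and the Euclidean Hodge star $\Hs_0 = \Hs_{h_{\svph_0,\mc C_0}}$. First I would prove the exact analogue of \lref{P0} for $\SCH^\del$ (and $\SCCH^\del$): taking $\svph_0$ (resp.\ $\svps_0$) on $\bb R^7$ with the standard Cartan involution $\mc C_0 = \diag(1,1,1,-1,-1,-1,-1)$ -- for which $h_{\svph_0,\mc C_0}$ is Euclidean and $\ol{\B}_1(\svph_0,\mc C_0) = \ol{\B}_1$ -- one shows there exist $\tld\al^\pm, \tld\be^\pm \in \Om^2(\ol{\B}_1)$ (resp.\ $\Om^3(\ol{\B}_1)$) with $\supp \C \B_1$ realising the stated Hessian and volume inequalities. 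By \pref{Hitchin-derivs} the Hessian of $\SCH[,\ol{\B}_1]^\del$ at $\svph_0$ is $\frac13\int_{\ol{\B}_1}\si\w\tld{\Hs}_0 I_{\svph_0}\si$ with $I_{\svph_0}=\frac43\pi_1+\pi_7-\pi_{27}$ (and $\frac14\int_{\ol{\B}_1}\si\w\tld{\Hs}_0 J_{\svps_0}\si$, $J_{\svps_0}=\frac34\pi_1+\pi_7-\pi_{27}$, for $\SCCH[,\ol{\B}_1]^\del$). The key elementary observation is that, since $\mc C_0$ acts by $\pm1$ in the standard coframe, $\tld{\Hs}_0=\Hs_0\circ\mc C_0^*$ on every exterior power; and since $\mc C_0\in\tld{\Gg}_2$ it commutes with the equivariant projections $\pi_d$, so that $\mc C_0^* I_{\svph_0}$ and $\mc C_0^* J_{\svps_0}$ are $h_0$-self-adjoint. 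Hence each Hessian becomes an honest Riemannian integral -- e.g.\ $\frac13\int_{\ol{\B}_1}g_0(\si,\mc C_0^* I_{\svph_0}\si)\,vol_0$ -- whose symbol has eigenvalues of both signs, with both eigenbundles of positive rank. As in \lref{P0}, one then writes down explicit radial test forms for each sign; for $\SCCH[,\ol{\B}_1]^\del$ the choice $\tld\al = f(r)\svph_0$ is especially transparent, giving $\dd\tld\al = \dd f \w \svph_0 \in \Om^4_7$, and since $a\mt a\w\svph_0$ is, up to a positive scalar, a $g_0$-isometry onto $\Om^4_7$, the Hessian collapses to a positive multiple of $\int_{\ol{\B}_1}\tld{g}_0(\dd f,\dd f)\,vol_0 = (3-4)\int_{\ol{\B}_1}(f')^2 r^{-2}(x^1)^2\,vol_0 < 0$ -- the sign being forced because $\tld{g}_0$ has a $4$-dimensional negative part against a $3$-dimensional positive one, which is precisely where the split case departs from the Riemannian one. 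A companion choice (e.g.\ a radial multiple of $\th^{123}$, whose differential has vanishing $\pi_1$-part, with the $\pi_7$- and $\pi_{27}$-parts against $\svps_0$ computed directly) realises the opposite sign, and the $2$-form test forms for $\SCH[,\ol{\B}_1]^\del$ are handled in the same way; part (2) then follows from part (1) by Taylor-expanding about the critical point $\svph_0$ exactly as in \lref{P0}, with $\ep$ depending only on the flat data.

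Next, with the flat model in hand, I would transfer to an arbitrary closed \sg\ $3$-form $\sph$ (or $4$-form $\sps$) with Cartan field $\mc C$ by copying the proof of \lref{P}. Statements (1) and (2) are diffeomorphism invariant, and since \sg\ acts transitively on the Cartan involutions of $\bb R^7$ (\cite[Prop.\ 4.4 \&\ Thm.\ 4.5]{A3F&ESLGoTG2}), a linear diffeomorphism reduces to $\sph|_0=\svph_0|_0$, $\mc C|_0=\mc C_0$, so $h_{\sph,\mc C}|_0$ is Euclidean. Using the scale invariance $\eta\mt\la^{1/3}\eta$, $\sph\mt\la\sph$ (with $\mc C$, a bundle automorphism, unchanged) and the rescalings $\mu_\eta:x\mt\eta x$, $\sph_\eta=\eta^{-3}\mu_\eta^*\sph$, $\mc C_\eta=\mu_\eta^*\mc C$, one has $\sph_\eta\to\svph_0$ and $\mc C_\eta\to\mc C_0$ locally uniformly, hence $h_{\sph_\eta,\mc C_\eta}\to h_{\svph_0,\mc C_0}$ locally uniformly, so $\eta^{-1}\ol{\B}_\eta(\sph,\mc C)\to\ol{\B}_1$ in the Hausdorff sense. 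The forms $\tld\al^\pm,\tld\be^\pm$ from the flat model, extended by $0$ and supported in $\B_{1-r_0}$ for small $r_0$, then verify the stated inequalities for $\sph_\eta$ on $\eta^{-1}\ol{\B}_\eta(\sph,\mc C)$ once $\eta$ is small, and rescaling back gives the result for $\sph$; the $4$-form case is identical with $\eta^{-4}$, $\svps_0$, $J$ in place of $\eta^{-3}$, $\svph_0$, $I$. Since $\ep$ came from the flat model it is independent of $\sph$ and $\mc C$, as required (and as used in the iteration proving \tref{UB-Thm}).

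The only step demanding real work is the flat-model computation: verifying $\tld{\Hs}_0=\Hs_0\circ\mc C_0^*$ and the $h_0$-self-adjointness of $\mc C_0^* I_{\svph_0}$ and $\mc C_0^* J_{\svps_0}$, and then pinning down explicit compactly supported exact forms with Hessian of each sign -- the form $f(r)\svph_0$ handling one sign cleanly (via $\int_{\ol{\B}_1}\tld{g}_0(\dd f,\dd f)\,vol_0<0$), the other requiring a short but genuine type-decomposition calculation against $\svps_0$ (resp.\ $\svph_0$). Everything else -- diffeomorphism and scale invariance, Hausdorff convergence of the rescaled geodesic balls, and the fact that the Cartan field is exactly what makes a small geodesic ball a compact neighbourhood in the pseudo-Riemannian setting -- transcribes directly from \lrefs{P0} and \ref{P}, and from \cite{A3F&ESLGoTG2}.
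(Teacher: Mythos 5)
Your proposal is correct and follows essentially the same route as the paper: an explicit flat-model computation (the analogue of \lref{P0}, with part (2) obtained from part (1) by Taylor expansion at the torsion-free critical point), followed by the same diffeomorphism-invariance, rescaling and Hausdorff-convergence transfer as in \lref{P}, with the Cartan field held fixed so that the geodesic balls of \(h_{\sph,\mc{C}}\) behave as in the Riemannian case and \(\ep\) comes from the flat model alone. The only differences are cosmetic: the paper works directly with radial coordinate test forms (\(f(r)\dd x^{12}\), \(f(r)\dd x^{14}\) for \(\SCH\) and \(f(r)\dd x^{123}\), \(f(r)\dd x^{124}\) for \(\SCCH\)) and bare type-decomposition computations rather than your \(\tld{\Hs}_0 = \Hs_0\circ\mc{C}_0^*\) reformulation, while your alternative negative direction \(f(r)\svph_0\) for \(\SCCH\) (giving \(\int_{\ol{\B}_1}\tld{g}_0(\dd f,\dd f)\,vol_0<0\) up to a positive factor) is a valid, and slightly cleaner, substitute for the paper's \(f(r)\dd x^{124}\).
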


\begin{proof}
Consider firstly the case of \sg\ 3-forms.  As in \sref{4Loc}, let:
\ew
\sph_0 = \dd x^{123} - \dd x^{145} - \dd x^{167} + \dd x^{246} - \dd x^{247} - \dd x^{347} - \dd x^{356} \et \mc{C}_0 = \diag(+1,+1,+1,-1,-1,-1,-1)
\eew
be the standard (torsion-free) \sg\ 3-form on \(\bb{R}^7\) (inducing the indefinite metric \(\tld{g}_0 = \sum_{i=1}^3 \lt(\dd x^i\rt)^{\ts2} - \sum_{i=4}^7 \lt(\dd x^i\rt)^{\ts2}\) and Euclidean volume form \(vol_0\)) and the standard Cartan field on \(\bb{R}^7\), respectively, and begin by considering (1) in the case \(\sph = \sph_0\), \(\mc{C} = \mc{C}_0\) and \(\eta = 1\).  Let \(r\) and \(f:[0,1]\to[0,1]\) be as in the proof of \lref{P0} and define:
\ew
\tld{\al}^+ = f(r)\dd x^{12} \et \tld{\al}^- = f(r)\dd x^{14}.
\eew
A direct calculation yields:
\caw
\lt\|\pi_1\lt(\dd\tld{\al}^+\rt)\rt\|_{\tld{g}_0} = \frac{1}{7}\lt(\frac{\dd f}{\dd r}\frac{1}{r}\rt)^2\lt(x^3\rt)^2, \hs{5mm} \lt\|\pi_7\lt(\dd\tld{\al}^+\rt)\rt\|_{\tld{g}_0} = -\frac{1}{4}\lt(\frac{\dd f}{\dd r}\frac{1}{r}\rt)^2\lt(\lt(x^4\rt)^2 + \lt(x^5\rt)^2 + \lt(x^6\rt)^2 + \lt(x^7\rt)^2\rt),\\
\lt\|\pi_{27}\lt(\dd\tld{\al}^+\rt)\rt\|_{\tld{g}_0} = \lt(\frac{\dd f}{\dd r}\frac{1}{r}\rt)^2\lt(\frac{6}{7}\lt(x^3\rt)^2 - \frac{3}{4}\lt(\lt(x^4\rt)^2 + \lt(x^5\rt)^2 + \lt(x^6\rt)^2 + \lt(x^7\rt)^2\rt)\rt),
\caaw
and thus, by \pref{Hitchin-derivs}:
\ew
\lt.\mc{D}^2\SCH[,\ol{\B}_1]^\del\rt|_{\sph_0}\lt(\dd\tld{\al}^+,\dd\tld{\al}^+\rt) = \bigintsss_{\ol{\B}_1} \frac{1}{3}\lt(\frac{\dd f}{\dd r}\frac{1}{r}\rt)^2\lt(-\frac{2\lt(x^3\rt)^2}{3} + \frac{\lt(x^4\rt)^2}{2} + \frac{\lt(x^5\rt)^2}{2} + \frac{\lt(x^6\rt)^2}{2} + \frac{\lt(x^7\rt)^2}{2}\rt) vol_0.
\eew
Likewise:
\caw
\lt\|\pi_1\lt(\dd\tld{\al}^-\rt)\rt\|_{\tld{g}_0} = \frac{1}{7}\lt(\frac{\dd f}{\dd r}\frac{1}{r}\rt)^2\lt(x^5\rt)^2, \hs{5mm} \lt\|\pi_7\lt(\dd\tld{\al}^-\rt)\rt\|_{\tld{g}_0} = \frac{1}{4}\lt(\frac{\dd f}{\dd r}\frac{1}{r}\rt)^2\lt(-\lt(x^2\rt)^2 - \lt(x^3\rt)^2 + \lt(x^6\rt)^2 + \lt(x^7\rt)^2\rt),\\
\lt\|\pi_{27}\lt(\dd\tld{\al}^-\rt)\rt\|_{\tld{g}_0} = \lt(\frac{\dd f}{\dd r}\frac{1}{r}\rt)^2\lt(\frac{6}{7}\lt(x^5\rt)^2 + \frac{3}{4}\lt(-\lt(x^2\rt)^2 - \lt(x^3\rt)^2 + \lt(x^6\rt)^2 + \lt(x^7\rt)^2\rt)\rt),
\caaw
and thus, by \pref{Hitchin-derivs}:
\ew
\lt.\mc{D}^2\SCH[,\ol{\B}_1]^\del\rt|_{\sph_0}\lt(\dd\tld{\al}^-,\dd\tld{\al}^-\rt) = \bigintsss_{\ol{\B}_1} \frac{1}{3}\lt(\frac{\dd f}{\dd r}\frac{1}{r}\rt)^2\lt(\frac{\lt(x^2\rt)^2}{2} + \frac{\lt(x^3\rt)^2}{2} - \frac{2\lt(x^5\rt)^2}{3} - \frac{\lt(x^6\rt)^2}{2} - \frac{\lt(x^7\rt)^2}{2}\rt) vol_0.
\eew
By symmetry, for any distinct \(i,j\in\{1,...,7\}\):
\ew
\bigintsss_{\ol{\B}_1}\lt(\frac{\dd f}{\dd r}\frac{1}{r}\rt)^2\lt(\lt(x^i\rt)^2 - \lt(x^j\rt)^2\rt) vol_0 = 0.
\eew
Thus, for \(\tld{\al}^+\), one sees that:
\ew
\lt.\mc{D}^2\SCH[,\ol{\B}_1]^\del\rt|_{\sph_0}\lt(\dd\tld{\al}^+,\dd\tld{\al}^+\rt) = \bigintsss_{\ol{\B}_1} \frac{1}{3}\lt(\frac{\dd f}{\dd r}\frac{1}{r}\rt)^2\lt(\frac{\lt(x^5\rt)^2}{3} + \frac{\lt(x^6\rt)^2}{2} + \frac{\lt(x^7\rt)^2}{2}\rt) vol_0 > 0.
\eew
Similarly, for \(\tld{\al}^-\), one sees that:
\ew
\lt.\mc{D}^2\SCH[,\ol{\B}_1]^\del\rt|_{\sph_0}\lt(\dd\tld{\al}^-,\dd\tld{\al}^-\rt) = -\bigintsss_{\ol{\B}_1} \frac{1}{3}\lt(\frac{\dd f}{\dd r}\frac{1}{r}\rt)^2\frac{2\lt(x^5\rt)^2}{3} vol_0 < 0.
\eew
Thus, (1) has been established in the case \(\sph = \sph_0\), \(\mc{C} = \mc{C}_0\) and \(\eta = 1\).  Moreover, (2) now follows in this case by taking \(\tld{\be}^\pm = t\tld{\al}^\pm\) for \(t > 0\) sufficiently small, as in \lref{P0}.

The general case now follows as in the proof of \lref{P} by noting firstly that, once again, both (1) and (2) are diffeomorphism invariant and thus that, \wlg, one may assume \(\sph|_0 = \sph_0|_0\) and \(\mc{C}|_0 = \mc{C}_0|_0\), and noting secondly that both (1) and (2) are invariant under the rescaling:
\ew
\eta \mt \la^\frac{1}{3}\eta, \hs{3mm} \sph \mt \la\sph, \hs{3mm} \tld{\al}^\pm_\eta \mt \la\tld{\al}^\pm_\eta  \et \tld{\be}^\pm_\eta \mt \la\tld{\be}^\pm_\eta
\eew
(with \(\mc{C}\) remaining constant) and that, writing \(\mu_\eta : x \in \bb{R}^7 \mt \eta x \in \bb{R}^7\) as in \sref{4Loc}, one has:
\ew
\eta^{-3}\mu_\eta^*\sph \to \sph_0 \text{ locally uniformly and } \eta^{-1}\ol{\B}_\eta\lt(\sph,\mc{C}\rt) \to \ol{\B}_1 \text{ in the Hausdorff sense as } \eta \to 0.
\eew\\

The argument for \sg\ 4-forms is entirely analogous.  For (1) in the case \(\sps = \sps_0\), \(\mc{C} = \mc{C}_0\) and \(\eta = 1\) one takes:
\ew
\tld{\al}^+ = f(r)\dd x^{123} \et \tld{\al}^- = f(r)\dd x^{124}
\eew
and computes that:
\ew
\lt.\mc{D}^2\SCCH[,\ol{\B}_1]^\del\rt|_{\sps_0}\lt(\dd\tld{\al}^+,\dd\tld{\al}^+\rt) = \bigintsss_{\ol{\B}_1} \frac{1}{4}\lt(\frac{\dd f}{\dd r}\frac{1}{r}\rt)^2\lt(\frac{\lt(x^4\rt)^2}{2} + \frac{\lt(x^5\rt)^2}{2} + \frac{\lt(x^6\rt)^2}{2} + \frac{\lt(x^7\rt)^2}{2}\rt) vol_0 > 0
\eew
and:
\ew
\lt.\mc{D}^2\SCCH[,\ol{\B}_1]^\del\rt|_{\sps_0}\lt(\dd\tld{\al}^-,\dd\tld{\al}^-\rt) &= \bigintsss_{\ol{\B}_1} \frac{1}{4}\lt(\frac{\dd f}{\dd r}\frac{1}{r}\rt)^2\lt(\frac{\lt(x^3\rt)^2}{2} - \frac{\lt(x^5\rt)^2}{2} - \frac{\lt(x^6\rt)^2}{2} - \frac{3\lt(x^7\rt)^2}{4}\rt) vol_0\\
&= -\bigintsss_{\ol{\B}_1}\lt(\frac{\dd f}{\dd r}\frac{1}{r}\rt)^2\lt(\frac{\lt(x^6\rt)^2}{2} + \frac{3\lt(x^7\rt)^2}{4}\rt) vol_0 < 0,
\eew
using symmetry, as before.  The rest of the argument now proceeds as above.

\end{proof}

Given an arbitrary compact, oriented 7-manifold \(\M\) with boundary equipped with a closed \sg\ 3-form \(\sph\), one may now use \lref{sg-analogue} together with the argument in \sref{GMT} to construct \(\sph_n^\pm \in \lt[\sph\rt]_{\tl}^\del\) such that:
\ew
\lim_{n \to \0} \SCH^\del\lt(\sph_n^+\rt) \to \0 \et \lim_{n \to \0} \SCH^\del\lt(\sph_n^-\rt) \to 0,
\eew
where, at each step, one first chooses a Cartan field \(\mc{C}_n^\pm\) (\wrt\ \(\sph_n^\pm\)) on each component of the set \(\lt.\M\m\osr(\del\M \cup \M_{sing})\rt.\) before constructing \(\sph_{n+1}^\pm\).  The argument in the 4-form case is identical.  Thus, it has been proven that:\vs{3mm}

\noindent{\bf Theorem \ref{UB-Thm}} (\sg\ case){\bf .}
\em The functionals \(\SCH^\del\) and \(\SCCH^\del\) are always unbounded above and below.  Specifically, let \(\M\) be a compact, oriented 7-orbifold with boundary and let \(\sph\) be a closed \sg\ 3-form on \(\M\).  Then:
\ew
\inf_{\sph' \in \lt[\sph\rt]^\del_{\tl}} \SCH^\del\lt(\sph'\rt) = 0 \et \sup_{\sph' \in \lt[\sph\rt]^\del_{\tl}} \SCH^\del\lt(\sph'\rt) = \infty.
\eew
Likewise, the analogous statement holds for the functional \(\SCCH^\del\).  In particular, by taking \(\del\M = \es\), the same conclusions apply to the Hitchin functionals \(\SCH\) and \(\SCCH\).\vs{2mm}\em

\qed\vs{3mm}

\section{The unboundedness below of \(\CH^\del\)}\label{CHInput}

The aim of this section is to prove \tref{MT'}.  Clearly the deduction of \lref{P0}(2) from \lref{P0}(1), the proof of \lref{P} and the arguments of \sref{GMT} all apply equally to either \(\CH^\del\) or \(\CCH^\del\).  However, \lref{P0}(1) has no exact analogue for \(\CH^\del\).  Indeed, by \cite[Prop.\ 1]{BVPiD74&3RtEH}, writing \(\B_1\pc\bb{R}^7\) for the open unit ball in \(\bb{R}^7\) and:
\ew
\ph_0 = \dd x^{123} + \dd x^{145} + \dd x^{167} + \dd x^{246} - \dd x^{247} - \dd x^{347} - \dd x^{356}
\eew
for the standard (torsion-free) \g\ 3-form on \(\bb{R}^7\), the Hessian \(\mc{D}^2\CH[,\ol{\B}_1]^\del\) is non-positive definite (note that one may embed \(\lt(\ol{\B}_1,\ph_0\rt) \emb \lt(\rqt{\bb{R}^7}{3\bb{Z}^7},\ph_0\rt)\)).  There is, however, a partial analogue of \lref{P0} for \(\CH^\del\):
\begin{Lem}\label{CH-analogue}
There exists \(\al^-\in\Om^2\lt(\ol{\B}_1\rt)\) with \(\supp(\al^-)\C\B_1\) such that:
\ew
\lt.\mc{D}^2\CH[,\ol{\B}_1]^\del\rt|_{\ph_0}(\dd\al^-,\dd\al^-) < 0.
\eew
\end{Lem}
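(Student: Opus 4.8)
The plan is to reduce to an explicit local computation in the spirit of \lrefs{P0} and \ref{sg-analogue}. Since $\ph_0$ is torsion-free one has $g_{\ph_0} = g_0$, $vol_{\ph_0} = vol_0$ and $\Hs_{\ph_0}\ph_0 = \Hs_0\ph_0$, so by \pref{Hitchin-derivs} (the same computation that gives \eref{D2CCH-eq}, with $I_{\ph_0}$ in place of $J_{\ps_0}$), for every $\ga \in \dd\Om^2\lt(\ol{\B}_1;\del\ol{\B}_1\rt)$:
\ew
\lt.\mc{D}^2\CH[,\ol{\B}_1]^\del\rt|_{\ph_0}(\ga,\ga) = \bigintsss_{\ol{\B}_1} \frac{1}{3}\lt(\frac{4}{3}\|\pi_1\ga\|^2_{g_0} + \|\pi_7\ga\|^2_{g_0} - \|\pi_{27}\ga\|^2_{g_0}\rt) vol_0,
\eew
the type decomposition being taken \wrt\ $\ph_0$. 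It therefore suffices to exhibit a $2$-form $\al^-$ with $\supp(\al^-) \C \B_1$ for which $\pi_{27}(\dd\al^-)$ strictly dominates, in $L^2\lt(\ol{\B}_1\rt)$, the combination $\frac{4}{3}\pi_1(\dd\al^-) + \pi_7(\dd\al^-)$.

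I would use the ansatz $\al^- = f(r)\,\tau$, where $r$ and $f:[0,1]\to[0,1]$ are the radial function and cutoff from the proof of \lref{P0} and $\tau$ is a constant-coefficient $2$-form, so that $\dd\al^- = \dd f\w\tau = \frac{1}{r}\frac{\dd f}{\dd r}\sum_i x^i\,(\dd x^i\w\tau)$ is supported in an annulus $\C\B_1$. The feature distinguishing this case from the $\CCH^\del$- and $\SCH^\del$-cases is the choice of $\tau$: since $\ph_0$ encodes the Fano plane, every pair of indices lies on exactly one of its seven defining triples, so for no coordinate $2$-form $\dd x^{ij}$ does $\dd f\w\dd x^{ij}$ have vanishing $\pi_1$-component; and, the norms above being genuinely non-negative, such a term can only obstruct negativity. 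I would therefore take $\tau \in \ww[14]{2}\lt(\bb{R}^7\rt)^* \cong \fr{g}_2$, i.e.\ $\tau\w\Hs_0\ph_0 = 0$, say $\tau = \dd x^{23} - \dd x^{45}$. Then $\<\dd f\w\tau,\ph_0\?\,vol_0 = \dd f\w\tau\w\Hs_0\ph_0 = 0$ pointwise, so $\pi_1(\dd\al^-) \equiv 0$ and the Hessian collapses to $\frac{1}{3}\int_{\ol{\B}_1}\lt(\|\pi_7(\dd f\w\tau)\|^2_{g_0} - \|\pi_{27}(\dd f\w\tau)\|^2_{g_0}\rt)vol_0$.

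Expanding $\pi_d(\dd f\w\tau) = \frac{1}{r}\frac{\dd f}{\dd r}\sum_i x^i\,\pi_d(\dd x^i\w\tau)$ and integrating over $\ol{\B}_1$, the cross-terms vanish by parity and $\la := \int_{\ol{\B}_1}\lt(\frac{1}{r}\frac{\dd f}{\dd r}\rt)^2(x^1)^2\,vol_0 > 0$ is independent of the index by rotational symmetry, so:
\ew
\lt.\mc{D}^2\CH[,\ol{\B}_1]^\del\rt|_{\ph_0}(\dd\al^-,\dd\al^-) = \frac{\la}{3}\lt(\sum_{i=1}^7 \|\pi_7(\dd x^i\w\tau)\|^2_{g_0} - \sum_{i=1}^7 \|\pi_{27}(\dd x^i\w\tau)\|^2_{g_0}\rt).
\eew
Both sums are $\Gg_2$-invariant quadratic forms in $\tau \in \fr{g}_2$, hence equal $\ka_7\|\tau\|^2$ and $\ka_{27}\|\tau\|^2$ for constants $\ka_7, \ka_{27} \ge 0$; since $\pi_1(\dd x^i\w\tau) = 0$ and $\sum_i\|\dd x^i\w\tau\|^2_{g_0} = 5\|\tau\|^2$, one has $\ka_7 + \ka_{27} = 5$. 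Thus the lemma reduces to the single numerical inequality $\ka_{27} > \tfrac{5}{2}$, equivalently $\ka_7 < \tfrac{5}{2}$.

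Finally one verifies $\ka_7 < \tfrac{5}{2}$ (in fact $\ka_7 = \tfrac{1}{2}$) by computing the $\pi_7$-components of the $\dd x^i\w\tau$ outright for $\tau = \dd x^{23} - \dd x^{45}$, using that $\lt\{\tfrac12\,v\hk\Hs_0\ph_0 : v \in \bb{R}^7\rt\}$ is an orthonormal frame for the $\pi_7$-summand, so that $\|\pi_7\ga\|^2_{g_0} = \tfrac14\sum_{i=1}^7 \<\dd x^i\w\ga,\Hs_0\ph_0\?^2$; the required inner products reduce to a handful of orthogonality checks against the seven $4$-term pieces of $\Hs_0\ph_0$. (Alternatively one can skip the $\ww[14]{2}$-reduction and argue directly with $\al^- = f(r)\,\dd x^{14}$ in the style of \lref{sg-analogue}, at the price of also carrying along the then-nonzero $\pi_1$ term.) This last step — the explicit determination of the type weights $\ka_7, \ka_{27}$ — is the one genuinely non-formal ingredient and the step I would expect to be the main obstacle; everything else is representation theory or rotational symmetry of the ball. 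No difficulty arises from exactness or support, since $\dd\al^-$ is exact and compactly supported in $\B_1$ by construction (and $H^3_c(\B_1) = 0$ in any case).
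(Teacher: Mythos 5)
Your proposal is correct, and the claims you leave to "a handful of orthogonality checks" do check out: \(\tau = \dd x^{23} - \dd x^{45}\) satisfies \(\tau\w\Hs_0\ph_0 = 0\), so \(\tau \in \ww[14]{2}\lt(\bb{R}^7\rt)^*\) and \(\pi_1(\dd x^i\w\tau)=0\) for every \(i\); the parity/rotational-symmetry reduction of the Hessian to \(\frac{\la}{3}\lt(Q_7(\tau)-Q_{27}(\tau)\rt)\), where \(Q_d(\tau)=\sum_i\lt\|\pi_d\lt(\dd x^i\w\tau\rt)\rt\|^2_{g_0}\), is valid; and using your orthonormal frame \(\lt\{\tfrac12\,e_i\hk\Hs_0\ph_0\rt\}\) for the type-7 summand one finds \(Q_7(\tau)=1\), hence \(Q_{27}(\tau)=5\|\tau\|^2-Q_7(\tau)=9\) and the Hessian equals \(-\tfrac{8\la}{3}<0\) (so indeed \(\ka_7=\tfrac12\); note the Schur-lemma step is harmless but unnecessary, since only this one value of \(\tau\) is ever used). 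The paper itself takes precisely the route you relegate to a parenthesis: it sets \(\al^- = f(r)\,\dd x^{12}\) (your \(f(r)\,\dd x^{14}\) up to relabelling) and computes the type decomposition directly, obtaining the pointwise-negative integrand \(-\frac13\lt(\frac{\dd f}{\dd r}\frac1r\rt)^2\lt(\frac23\lt(x^3\rt)^2+\frac12\lt(\lt(x^4\rt)^2+\dots+\lt(x^7\rt)^2\rt)\rt)\); so the nonvanishing \(\pi_1\)-component you take pains to avoid is in fact harmless here, your heuristic that it "can only obstruct negativity" notwithstanding. What your route buys is cleaner bookkeeping — no pointwise \(\pi_7/\pi_{27}\) norms as functions of \(x\), with negativity emerging only after integration via the type weights — at the cost of a little representation theory; what the paper's choice buys is a two-line calculation exactly parallel in style to \lref{P0} and \lref{sg-analogue}, with a pointwise sign. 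Either way the lemma holds, and your argument is a complete and correct alternative proof.
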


\begin{proof}
Consider \(\al^- = f(r)\cdot \dd x^{12}\), for \(f\) and \(r\) as defined in the proof of \lref{P0}.  A direct calculation yields that:
\ew
\lt.\mc{D}^2\CH[,\ol{\B}_1]^\del\rt|_{\ph_0}(\dd\al^-,\dd\al^-) = -\bigintsss_{\ol{\B}_1} \frac{1}{3}\lt(\frac{\dd f}{\dd r}\frac{1}{r}\rt)^2\lt(\frac{2}{3}(x^3)^2 + \frac{1}{2}\lt((x^4)^2 + (x^5)^2 + (x^6)^2 + (x^7)^2\rt)\rt) vol_0 < 0,
\eew
as required.
\end{proof}

By \lref{CH-analogue}, \tref{MT'} follows at once:\vs{3mm}

\noindent{\bf Theorem \ref{MT'}.}
\em The functional \(\CH^\del\) is always unbounded below.  Specifically, let \(\M\) be a compact, oriented 7-orbifold with boundary, let \(\ph\) be a closed \g\ 3-form on \(\M\).  Then:
\ew
\inf_{\ph' \in [\ph]^\del_+} \CH^\del\lt(\ph'\rt) = 0.
\eew
In particular, by taking \(\del\M = \es\), the same conclusion applies to the Hitchin functional \(\CH\).\vs{2mm}\em

\qed

\section{Applications: classification of the critical points of \(\CCH^\del\), \(\SCH^\del\) and \(\SCCH^\del\), and density of initial conditions of Laplacian coflow which do not lead to convergent solutions}

The aim of this section is to prove \trefs{saddle-thm} and \ref{donco-thm}.\vs{3mm}

\noindent{\bf Theorem \ref{saddle-thm}.}
\em The critical points of \(\CCH^\del\), \(\SCH^\del\) and \(\SCCH^\del\) are always saddles.  Specifically, let \(\M\) be a compact, oriented 7-orbifold with (possibly empty) boundary and let \(\ps\) be a torsion-free \g\ 4-form on \(\M\).  Then there exist infinite-dimensional subspaces \(\mc{S}_4^\pm\lt(\ps\rt) \pc \T_\ps [\ps]_+^\del\) along which \(\lt.\mc{D}^2\CCH^\del\rt|_\ps\) is positive definite and negative definite, respectively.  The analogous statement holds for the functionals \(\SCH^\del\) and \(\SCCH^\del\).\vs{2mm}\em

\begin{proof}
Begin with the case of \g\ 4-forms.  Let \(\lt\{U_i\rt\}_{i \in \bb{N}}\) be a countable disjoint collection of open subsets of \(\M\osr(\del\M \cup \M_{sing})\).  By \lref{P}, for each \(i \in \bb{N}\) there exist 3-forms \(\al_i^\pm\) on \(\M\) with \(\supp(\al_i^\pm) \C U_i\) such that \(\lt.\mc{D}^2\CCH^\del\rt|_\ps\) is positive definite (respectively, negative definite) along \(\dd\al_i^\pm\).  Now take:
\ew
\mc{S}^\pm_4(\ps) = \Ds_{i \in \bb{N}} \bb{R} \1 \dd\al^\pm_i \cc \dd\Om^3(\M;\del\M).
\eew
One can verify that \(\lt.\mc{D}^2\CCH^\del\rt|_\ps\) is positive and negative definite along \(\mc{S}^\pm_4(\ps)\) respectively, as required.  The proof in the case of \sg\ 3- or 4-forms is analogous.

\end{proof}

Now turn to \tref{donco-thm}.  Let \(\M\) be a (possibly non-compact) oriented 7-manifold without boundary, let \(\ps \in \Om^4_+(\M)\) be a closed \g\ 4-form and recall the set \([\ps]_+^\del = [\ps]_+ \pc \Om^4(\M)\) defined in \eref{be+-defn}.  Given a Riemannian metric \(g\) on \(\M\) and a countable exhaustion of \(\M\) by compact subsets \(K_0 \cc K_1 \cc ... \cc \M\),  the countable family of seminorms \(\|-\|_{C^0_g(K_n)}\) on \(\Om^4(\M)\) is separating and induces the \(C^0\) topology on \(\Om^4(\M)\) (and hence on \([\ps]_+\)); this topology is independent of the choice of \(g\) and \(K_n\).

Next, recall that the Laplacian coflow of \(\ps\) is the solution of the evolution PDE:
\ew
\frac{\del \ps(t)}{\del t} = \De_{\ps(t)}\ps(t) = \dd\dd^*_{\ps(t)}\ps(t) \et \ps(0) = \ps.
\eew
Using this terminology, I now prove \tref{donco-thm}.  Recall the statement of the theorem:\vs{3mm}

\noindent{\bf Theorem \ref{donco-thm}.}
\em Let \(\M\) be a (possibly non-compact) oriented 7-manifold without boundary and let \(\ps \in \Om^4_+(\M)\) be a closed \g\ 4-form.  Consider the space:\vs{-2mm}
\ew
\mc{O}_{[\ps]_+} = \bigg\{\ps'\in[\ps]_+~\bigg| \parbox{7.5cm}{\begin{center}no solution to the Laplacian coflow started at \(\ps'\) converges to a torsion-free \g\ 4-form\end{center}}\bigg\}.
\eew
Then \(\mc{O}_{[\ps]_+} \pc [\ps]_+\) is dense in the \(C^0\) topology.\vs{2mm}\em

\begin{proof}
Begin by considering the ball \(\ol{\B}_\eta \pc \bb{R}^7\) equipped with the standard flat \g\ 4-form \(\ps_0\) and choose \(\al^+_\eta \in \Om^3\lt(\ol{\B}_\eta\rt)\) with \(\supp\lt(\al^+_\eta\rt) \C \ol{\B}_\eta\) such that:
\e\label{LCF-input}
\lt.\mc{D}^2\CCH[,\ol{\B}_\eta]^\del\rt|_{\ps_0}(\dd\al^+_\eta,\dd\al^+_\eta) > 0,
\ee
according to \lref{P}.  I begin by proving that for all \(s > 0\) sufficiently small, the Laplacian coflow on \(\B_\eta\) starting from \(\ps_0 + s\dd\al^+_\eta\) cannot converge to a torsion-free \g\ 4-form.

Indeed, let \(\h{\ps}\) be a torsion-free \g\ 4-form on \(\ol{\B}_\eta\) such that:
\e\label{CS}
\supp\big(\h{\ps}-\ps_0\big)\C\B_\eta.
\ee
Since \(\h{\ps}\) is torsion-free, \(g_{\h{\ps}}\) is Ricci-flat \cite[Prop.\ 11.8]{RG&HG}.  Moreover, the mean curvature of \(\del\ol{\B_\eta} = S^6_\eta\) as a submanifold of \((\ol{\B}_\eta,g_{\h{\ps}})\) \wrt\ the inwards pointing normal is \(\frac{1}{\eta}\), since \(g_{\h{\ps}}\) is simply the Euclidean metric in a neighbourhood of \(S^6_\eta\), by \eref{CS}. Thus, using \cite[Thm.\ 2.1]{AGCTwAtVEfS}, it follows that:
\e\label{HK-vol-bd}
\CCH[,\ol{\B}_\eta]^\del\lt(\h{\ps}\rt) &\le \bigintsss_0^\eta \bigg(1-\frac{r}{\eta}\bigg)^7\dd r \cdot \text{Vol}_{S^6_\eta}\big(\h{\ps}\big)\\
& = \frac{\eta}{7}\text{Vol}_{S^6_\eta}(\ps_0),
\ee
where \(\text{Vol}_{S^6_\eta}\big(\h{\ps}\big)\) is the volume of \(S^6_\eta\) \wrt\ the metric on \(S^6_\eta\) induced by \(\h{\ps}\), which is the same as the metric induced by \(\ps_0\), using \eref{CS}.  A direct calculation shows that \eref{HK-vol-bd} is saturated when \(\h{\ps} = \ps_0\). Hence, for all torsion-free \(\h{\ps}\) satisfying \eref{CS}:
\e\label{TFVB}
\CCH[,\ol{\B}_\eta]^\del\big(\h{\ps}\big) \le \CCH[,\ol{\B}_\eta]^\del(\ps_0).
\ee

Now let \(\ps_s(t)\) denote a solution to the Laplacian coflow started at \(\ps_0  + s\dd\al^+_\eta\) and suppose that \(\ps_s(t)\) existed for all \(t\) and converged to a torsion-free \g\ 4-form \(\h{\ps}\) as \(t \to \infty\).  Since Laplacian coflow preserves \(\ps_0\), \(\ps_s(t)\) is fixed on the region where \(\ps_s(0) = \ps_0\) and hence:
\ew
\supp\lt(\h{\ps}- \ps_0\rt) \cc \supp(\ps_s(0) - \ps_0) \C \B_\eta.
\eew
(In particular, each \(\ps_s(t)\) can be extended to the boundary of \(\ol{\B}_\eta\).)  Thus, using \eref{TFVB}, one has \(\CCH[,\ol{\B}_\eta]^\del\lt(\h{\ps}\rt) \le \CCH[,\ol{\B}_\eta]^\del(\ps_0)\).  However, the Laplacian coflow increases volume pointwise \cite[eqn.\ (4.32)]{STBoaMLCoG2S}. Hence:
\ew
\CCH[,\ol{\B}_\eta]^\del\lt(\h{\ps}\rt) &\ge \CCH[,\ol{\B}_\eta]^\del(\ps_s(0)) = \CCH[,\ol{\B}_\eta]^\del(\ps_0 + s\dd\al_\eta^+)\\
&> \CCH[,\ol{\B}_\eta]^\del(\ps_0)\\
\eew
where the last line follows from \eref{LCF-input} for all \(s > 0\) sufficiently small (cf.\ the proof of \lref{P0}), contradicting \eref{TFVB}.

Thus, there are (uniformly) arbitrarily small, compactly supported perturbations \(\ps_s(0) = \ps_0 + s\dd\al^+_\eta\) of \(\ps_0\) such that the Laplacian coflow on \(\B_\eta\) started from \(\ps_s(0)\) cannot converge to a torsion-free \g\ 4-form.  To complete the proof, therefore, it suffices to prove that, given any \((\M,\ps)\) as in the statement of the theorem and any \(\ps'\in[\ps]_+\), there exists a closed \g\ 4-form \(\ps'' \in [\ps]_+\), arbitrarily close to \(\ps'\) in the \(C^0\) topology, such that \(\ps''\) is diffeomorphic to \(\ps_0\) in some small neighbourhood of \(\M\).  This follows from the subsequent local result:
\begin{Cl}
Let \(\ps'\) be a closed \g\ 4-form on \(\bb{R}^7\) such that \(\ps|_0 = \ps_0|_0\).  Then for all \(\de > 0\), there exists \(\eta \in (0,1]\) and \(\al \in \Om^3(\bb{R}^7)\) with \(\supp(\al) \C \B_{2\eta}\) such that:
\ew
\ps' + \dd\al = \ps_0 \text{ on } \B_\eta \et \|\dd\al\|_{C^0_{\ps'}} < \de.
\eew
(Note that \(\ps' + \dd\al\) is automatically of \g-type for \(\de>0\) sufficiently small, by the stability of \g\ 4-forms.)
\end{Cl}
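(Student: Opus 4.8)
The plan is to make \(\ps'\) literally equal to the flat form \(\ps_0\) on a small ball about the origin by subtracting from \(\ps'\) a cut-off primitive of the closed \(4\)-form \(\ps'-\ps_0\). The one delicate point is that the primitive must be arranged to vanish to \emph{second} order at \(0\), so that differentiating the cut-off does not destroy the required \(C^0\)-smallness of \(\dd\al\).

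First I would apply the Poincar\'e lemma in its explicit radial form: as \(\bb{R}^7\) is contractible and \(\ps'-\ps_0\) is closed, the standard homotopy operator \(K\), given on a \(p\)-form \(\gamma\) by \((K\gamma)|_x = \int_0^1 t^{p-1}\,(\iota_x\gamma)|_{tx}\,\dd t\) with \(\iota_x\) the contraction against the Euler field \(\sum_i x^i\del_i\), yields \(\be := K(\ps'-\ps_0)\in\Om^3(\bb{R}^7)\) with \(\dd\be = \ps'-\ps_0\). The hypothesis \(\ps'|_0=\ps_0|_0\) forces \(\ps'-\ps_0=O(|x|)\) near \(0\); in the formula for \(K\) the contraction \(\iota_x\) supplies one further power of \(|x|\), so in fact \(\be=O(|x|^2)\) near \(0\). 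This extra order of vanishing is the crux.

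Next, for a parameter \(\eta\in(0,1]\) to be fixed later, I would take a fixed bump \(\ch_1\in C^\infty_c(\bb{R}^7)\) with \(\ch_1\equiv1\) on \(\B_1\) and \(\supp(\ch_1)\C\B_2\), and set \(\ch_\eta(x)=\ch_1(x/\eta)\); then \(\ch_\eta\equiv1\) on \(\B_\eta\), \(\supp(\ch_\eta)\C\B_{2\eta}\) and \(|\dd\ch_\eta|_{g_0}\le C/\eta\) for a universal \(C\). Put \(\al:=-\ch_\eta\be\in\Om^3(\bb{R}^7)\), so \(\supp(\al)\C\B_{2\eta}\). On \(\B_\eta\), where \(\ch_\eta\equiv1\), one gets \(\dd\al=-\dd\be=-(\ps'-\ps_0)\), hence \(\ps'+\dd\al=\ps_0\) there, as required. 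Off \(\B_\eta\) one has \(\dd\al=-\dd\ch_\eta\w\be-\ch_\eta(\ps'-\ps_0)\), supported in \(\B_{2\eta}\); on that ball (for \(\eta\) small) the first term is \(O(1/\eta)\cdot O(\eta^2)=O(\eta)\) and the second is \(O(\eta)\) in the Euclidean norm, so \(|\dd\al|_{g_0}=O(\eta)\) uniformly. Since \(g_{\ps'}\) and \(g_0\) coincide at \(0\) they are uniformly comparable on \(\B_{2\eta}\) once \(\eta\) is small, whence \(\|\dd\al\|_{C^0_{\ps'}}=O(\eta)\), which is \(<\de\) after shrinking \(\eta\). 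Finally \(\ps'+\dd\al\) equals \(\ps_0\) on \(\B_\eta\), equals \(\ps'\) off \(\B_{2\eta}\), and on \(\ol{\B}_{2\eta}\) differs from \(\ps'\) by less than \(\de\) in \(C^0\) while \(\ps'\) there takes values in a compact subset of the open orbit of \g\ 4-forms; so for \(\de\) small it is everywhere a \g\ 4-form, which is the parenthetical assertion.

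I expect the only real obstacle — more a matter of vigilance than of depth — to be precisely the order-of-vanishing bookkeeping above: an arbitrary primitive of \(\ps'-\ps_0\) is merely \(O(1)\) at the origin, and then \(\dd\ch_\eta\w\be\) would be of size \(O(1/\eta)\), which blows up as \(\eta\to0\) and kills the estimate. It is the combination of \(\ps'|_0=\ps_0|_0\) with the explicit radial primitive that upgrades \(\be\) to \(O(|x|^2)\) and rescues the construction; the remaining ingredients (comparability of the two metrics near \(0\), and stability of \g\ 4-forms for the last sentence) are routine.
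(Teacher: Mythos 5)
Your proposal is correct and takes essentially the same route as the paper: there, too, the key step is a primitive of \(\ps_0-\ps'\) built by radial integration (writing \(\bb{R}^7\setminus\{0\}\cong(0,\0)\x S^6\) and setting \(\vpi=\int_0^t\si_2\,\dd t\)), which vanishes quadratically at the origin precisely because \(\ps'-\ps_0\) vanishes there, followed by a cut-off at scale \(\eta\) giving the same \(O(\eta^{-1})\cdot O(\eta^2)+O(\eta)=O(\eta)\) estimate for \(\dd\al\). Your use of the explicit Poincar\'e homotopy operator in place of the paper's polar-coordinate primitive is only a cosmetic difference, and your order-of-vanishing bookkeeping is exactly the point the paper's proof also turns on.
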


\begin{proof}[Proof of Claim]
Consider the 4-form \(\ps_0-\ps'\). Since \(\ps_0-\ps'\) vanishes at 0, there is some constant \(C_1>0\) such that for all \(\eta \in (0,1]\):
\e\label{linbd}
\|\ps_0-\ps'\|_{C^0_{\ps'}(\B_{2\eta})} \le C_1\eta.
\ee
Similarly, since \(\dd(\ps_0-\ps')=0\), one can choose a primitive \(\vpi\in\Om^3\lt(\bb{R}^7\rt)\) for \(\ps_0-\ps'\) such that, for some constant \(C_2>0\) and all \(\eta \in (0,1]\):
\e\label{quadrbd}
\|\vpi\|_{C^0_{\ps'}(\B_{2\eta})} \le C_2\eta^2.
\ee
Indeed (cf.\ \cite[p.\ 16]{ACG2CMwFBNb1eq1}) identify \(\lt.\bb{R}^7\rt\osr\{0\} \cong (0,\infty) \x S^6\) and write \(\ps_0 - \ps' = \si_1 + \dd t \w \si_2\), where \(t\) is the parameter along \((0,\infty)\), \(\si_i\) depends parametrically on \(t\) (\(i = 1,2\)), \(\dd \si_1 = 0\) and \(\frac{\del \si_1}{\del t} = \dd \si_2\) (since \(\dd(\ps_0 - \ps') = 0\)).  Set \(\vpi = \bigintsss_0^t  \si_2 \dd t\).  Then:
\ew
\dd\vpi = \bigintsss_0^t \frac{\del \si_1}{\del t} \dd t + \dd t \w \si_2 = \si_1 + \dd t \w \si_2 = \ps_0 - \ps'
\eew
(since \(\si_1(t) \to 0\) as \(t \to 0\)) and \(\vpi\) clearly satisfies \eref{quadrbd}, as required.

Next, fix a smooth function \(f:[0,2]\to[0,1]\) such that \(f\equiv 1\) on \([0,1]\) and \(f \equiv 0\) on a neighbourhood of \(2\). Given any \(\eta \in (0,1]\), define \(f_\eta(r) = f\lt(\frac{r}{\eta}\rt)\) and set \(\al = f_\eta(r)\vpi\).  Then \(\ps' + \dd\al = \ps' + \dd\vpi = \ps_0\) on \(\B_\eta\), as required.  Moreover:
\ew
\|\dd\al\|_{C^0_{\ps'}} &= \lt\|\frac{\dd f_\eta}{\dd r}\dd r\w\vpi + f_\eta\dd\vpi\rt\|_{C^0_{\ps'}}\\
&\le \frac{\sup |f'|}{\eta} \|\dd r\w\vpi\|_{C^0_{\ps'}(\B_{2\eta})} + \|\ps_0 - \ps'\|_{C^0_{\ps'}(\B_{2\eta})}\\
&\le C_3\eta,
\eew
where the first inequality follows from the fact that \(\supp(f_\eta) \C \B_{2\eta}\) and the second inequality follows from \erefs{linbd} and \eqref{quadrbd}.  The claim now follows by taking \(\eta \in (0,1]\) sufficiently small. This in turn completes the proof of \tref{donco-thm}.

\let\qed\relax
\end{proof}

\end{proof}

~\vs{5mm}

\noindent Laurence H.\ Mayther\\
University of Cambridge\\
United Kingdom\\
{\it lhm32@cam.ac.uk}

\end{document}